\newtheorem{sat}{Theorem}[section]		
\newtheorem{lem}[sat]{Lemma}
\newtheorem{kor}[sat]{Corollary}			
\newtheorem{prop}[sat]{Proposition}
\newtheorem*{defi*}{Definition}			
\newtheorem*{bei*}{Example}
\newtheorem*{sat*}{Theorem}				
\newtheorem*{kor*}{Corollary}
\newtheorem*{rmk*}{Remark}				
\newtheorem*{quest*}{Question}	
\newtheorem{conj}{Conjecture}
\let\ssection=\section
\renewcommand{\section}{\setcounter{equation}{0}\ssection}
\newtheorem*{namedtheorem}{\theoremname}
\newcommand{\theoremname}{testing}
\newenvironment{named}[1]{\renewcommand{\theoremname}{#1}\begin{namedtheorem}}{\end{namedtheorem}}
\theoremstyle{remark}
\newtheorem*{bem}{Remark}
\newtheorem*{namedtheoremr}{\theoremnamer}
\newcommand{\theoremnamer}{testing}
\newcommand{\BR}{\mathbb R}			
\newcommand{\BS}{\mathbb S}
\newcommand{\CA}{\mathcal A}		
\newcommand{\CC}{\mathcal C}		
\newcommand{\CG}{\mathcal G}
\newcommand{\CM}{\mathcal M}		\newcommand{\CN}{\mathcal N}
\newcommand{\CO}{\mathcal O}		\newcommand{\CP}{\mathcal P}
\newcommand{\CQ}{\mathcal Q}		\newcommand{\CR}{\mathcal R}
\newcommand{\CS}{\mathcal S}		\newcommand{\CT}{\mathcal T}
\DeclareMathOperator{\vol}{vol}		
\DeclareMathOperator{\Map}{Map}
\DeclareMathOperator{\inj}{inj}
\DeclareMathOperator{\diam}{diam}
\newcommand{\comment}[1]{}
\DeclareMathOperator{\area}{area}
\DeclareMathOperator{\Fill}{Fill}
\DeclareMathOperator{\Lip}{Lip}
\DeclareMathOperator{\syst}{syst}
\DeclareMathOperator{\symlip}{SymLip}
\DeclareMathOperator{\arcsinh}{arcsinh}
\newcommand{\fsubd}{\mathrel{{\scriptstyle\searrow}\kern-1ex^d\kern0.5ex}}
\newcommand{\bsubd}{\mathrel{{\scriptstyle\swarrow}\kern-1.6ex^d\kern0.8ex}}
\newcommand{\BONE}{\mathds 1}
\begin{document}

\title[]{Counting curve types}
\author{Tarik Aougab}
\address{Department of Mathematics, Brown University}
\email{tarik\textunderscore  aougab@brown.edu}
\author{Juan Souto}
\address{IRMAR, Universit\'e de Rennes 1}
\email{jsoutoc@gmail.com}
\begin{abstract}
Let $S$ be a closed orientable hyperbolic surface, and let $\CO(K,S)$ denote the number of mapping class group orbits of curves on $S$ with at most $K$ self-intersections. Building on work of Sapir \cite{Sapir}, we give upper and lower bounds for $\CO(K,S)$ which are both exponential in $\sqrt{K}$.
\end{abstract}
\maketitle

\section{Introduction}

Let $S$ be a closed surface of genus $g\ge 2$. In this note we will be interested in the growth, as a function of $K$, of the number $\CO(K,S)$ of mapping class group orbits of curves $\gamma$ in $S$ with self-intersection number $\iota(\gamma,\gamma)\le K$. Recently, Sapir \cite{Sapir} proved that
\begin{equation}\label{eq-sapir}
\frac{1}{12} 2^{\sqrt{K}/12} \le \CO(K,S) \le (d_{S} \cdot \sqrt{K})^{d_{S} \sqrt{K}},
\end{equation}
where $d_{S}$ is a constant depending only on $S$. Our goal is to obtain, for $K$ large, a slightly improved lower exponential bound together with an also exponential upper bound. We show:

\begin{sat}\label{main}
For every $\delta>0$ there is $K_{\delta,S}$ with 
$$e^{(\pi\sqrt{|\chi(S)|}-\delta) \sqrt{K}} \le\CO(K,S)\le e^{(4\sqrt{2|\chi(S)|}+\delta)\sqrt{K}}$$
for every $K\ge K_{\delta,S}$.
\end{sat}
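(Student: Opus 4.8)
The plan is to prove the two inequalities by genuinely different arguments, in each case transferring the problem to the counting of closed geodesics on hyperbolic surfaces.

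\textit{Lower bound.} Fix an auxiliary hyperbolic metric $X$ on $S$. The input is the statistical law for self-intersection numbers of closed geodesics (Lalley; Chas--Lalley): for every $\epsilon>0$, all but an exponentially small proportion of the primitive closed geodesics $\gamma$ with $\ell_X(\gamma)\le L$ satisfy $\iota(\gamma,\gamma)\le \frac{L^2}{\pi^2|\chi(S)|}(1+\epsilon)$ once $L$ is large. Hence, taking $L=L(K)$ to be the largest value with $\frac{L^2}{\pi^2|\chi(S)|}(1+\epsilon)\le K$, i.e.\ $L=\pi\sqrt{|\chi(S)|}\,\sqrt{K/(1+\epsilon)}\,(1-o(1))$, the prime geodesic theorem produces at least $(1-o(1))e^{L}/L$ primitive closed geodesics of $X$-length at most $L$, all having self-intersection number at most $K$. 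Finally, no $\Map(S)$-orbit of curves contains more than $e^{o(L)}$ representatives that are $X$-geodesics of length at most $L$ — indeed only polynomially many, by Mirzakhani-type curve counting extended to non-simple curves (Mirzakhani; Erlandsson--Souto) — so the geodesics above represent at least $e^{L-o(L)}$ distinct orbits of curves with at most $K$ self-intersections. Choosing $\epsilon$ small (in terms of $\delta$) and $K$ large then gives $\CO(K,S)\ge e^{(\pi\sqrt{|\chi(S)|}-\delta)\sqrt K}$; a more careful choice (counting geodesics whose self-intersection number is slightly \emph{below} the typical value) would in fact improve the constant, but this is not needed.

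\textit{Upper bound.} The idea is that a curve $\gamma$ with $\iota(\gamma,\gamma)\le K$, although possibly very long in $X$, is short in a well-chosen metric. Indeed $\gamma$ fills a subsurface $\Sigma\subseteq S$ (finitely many up to $\Map(S)$; the extreme case is $\Sigma=S$), and cutting $\Sigma$ along $\gamma$ produces about $\iota(\gamma,\gamma)$ complementary disks of total area $2\pi|\chi(\Sigma)|$. Realising these as hyperbolic polygons with areas as balanced as the ($4$-valent) combinatorics allows produces a hyperbolic metric $X_\gamma$ on $S$ in which $\gamma$ is a geodesic with $\ell_{X_\gamma}(\gamma)\le \big(4\sqrt{2|\chi(S)|}+o(1)\big)\sqrt K$, the constant coming from an isoperimetric estimate for the complementary polygons. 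Since $\gamma$ fills, it crosses every simple closed curve, so a geodesic of $X_\gamma$-length $\epsilon$ forces $\ell_{X_\gamma}(\gamma)\ge 2\log(1/\epsilon)$; hence $X_\gamma$ lies in the $e^{-O(\sqrt K)}$-thick part $\mathcal{M}_0$ of moduli space. As moduli space has finite volume, $\mathcal{M}_0$ is covered by a number $N(\eta)$ of Teichm\"uller $\eta$-balls that is bounded independently of $K$; on each such ball lengths are distorted by at most $e^{2\eta}$, so the curves contributed by one ball number at most the primitive geodesics of length $\le e^{2\eta}\big(4\sqrt{2|\chi(S)|}+o(1)\big)\sqrt K$ in its centre, i.e.\ at most $e^{(1+o(1))e^{2\eta}\,4\sqrt{2|\chi(S)|}\,\sqrt K}$ by the prime geodesic theorem, uniformly over the compact part of moduli space. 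Summing over the $N(\eta)$ balls and letting $\eta\to0$ slowly as $K\to\infty$ gives $\CO(K,S)\le e^{(4\sqrt{2|\chi(S)|}+\delta)\sqrt K}$.

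\textit{Main obstacle.} The qualitative skeleton — short in some metric, a finite-volume moduli space with precompact thick part, the prime geodesic theorem, polynomial orbit growth — already yields bounds of the form $e^{\Theta(\sqrt K)}$; the real content is in the constants. On the lower side this means invoking the precise constant $\tfrac1{\pi^2|\chi(S)|}$ in the self-intersection statistics. On the upper side it means proving, with the optimal constant, that a filling curve with $k$ self-intersections admits a hyperbolic representative of length $\lesssim 4\sqrt{2|\chi(S)|\,k}$ in a metric that stays in a controlled part of moduli space — essentially an isoperimetric problem about realising $k$ disks glued along a $4$-valent ribbon graph as a hyperbolic surface — together with the bookkeeping that replaces a direct enumeration of ribbon graphs (which loses a factor $\log K$ in the exponent, as in Sapir's bound) by the prime geodesic count. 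I expect this to be the technical heart of the paper.
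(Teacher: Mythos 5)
Your overall skeleton coincides with the paper's (Lalley's self-intersection statistics plus the prime geodesic theorem and an orbit-multiplicity bound for the lower bound; a ``short metric'' plus a net in the thick part of moduli space and a geodesic count for the upper bound), but at each of the places where the actual work happens your argument has a gap. For the lower bound, the step ``each $\Map(S)$-orbit contributes only polynomially many geodesics of length $\le L$, by Mirzakhani/Erlandsson--Souto'' is not justified as stated: those counting theorems are asymptotics for a \emph{fixed} orbit as $L\to\infty$, with a multiplicative constant depending on the orbit, whereas here the orbits you must control are exactly those of curves whose length is comparable to $L$, so the family of orbits changes with $L$ and no uniform polynomial bound follows. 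The paper addresses precisely this point: it first restricts to geodesics whose normalized currents behave like the Liouville current (the set $\CG(X,\alpha,\epsilon)$ of Lemma \ref{lem1}, which is generic by Lalley's equidistribution), and for those proves a bound $C\cdot L^C$ on orbit representatives of length $\le L$ with $C$ independent of the curve (Lemma \ref{lem2}), via the Lipschitz metric, Theorem E of \cite{LRT}, and exponential growth of mapping class group orbits in Teichm\"uller space. Without some substitute for this uniformity your deduction that the $\sim e^L/L$ geodesics hit $e^{L-o(L)}$ distinct orbits does not go through.

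For the upper bound there are three problems. First, the existence of a hyperbolic metric in which a curve with $\iota(\gamma,\gamma)\le K$ has length $\le 4\sqrt{2|\chi(S)|K}$ is the technical heart (Theorem \ref{good metric}); your sketch via ``balanced hyperbolic polygons'' for the complementary disks is not developed and it is unclear it produces a genuine hyperbolic structure with the stated constant, whereas the paper proves it by extending the $4$-valent graph of $\gamma$ to a triangulation, invoking Koebe's discrete uniformization to get a circle packing, and applying Cauchy--Schwarz to the radii. Second, your claim that the $e^{-O(\sqrt K)}$-thick part of moduli space is covered by a number of $\eta$-balls ``bounded independently of $K$'' because moduli space has finite volume is false: as the thickness parameter $\epsilon=e^{-O(\sqrt K)}$ shrinks, the covering number grows (the paper shows it is polynomial in $|\log\epsilon|$, hence in $\sqrt K$, which is all that is needed -- Proposition \ref{prop-net}). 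Third, the per-ball count ``at most $e^{(1+o(1))L}$ geodesics of length $\le L$, uniformly over the compact part of moduli space'' cannot be applied as stated, since the net surfaces are not in a fixed compact part: their systoles may be as small as $e^{-O(\sqrt K)}$, and Huber's asymptotic is not uniform in that regime. The paper's fix is to use that the curves in question are filling, hence pass through the $\tfrac1{10}$-thick part, and to count based loops through a bounded set of thick basepoints, which gives a bound $\le C|\chi(S)|e^{L+C}$ uniform in the systole (Proposition \ref{prop-upper-bound-thick}). Each of these gaps is repairable -- indeed the paper repairs them -- but as written your proposal assumes rather than proves the statements that carry the constants $\pi\sqrt{|\chi(S)|}$ and $4\sqrt{2|\chi(S)|}$.
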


\begin{bem}
Note that Theorem \ref{main} does not say anything about the number $\CO(K,S)$ for $K$ small. See \cite{CFP} for some results in that direction. 
\end{bem}

We briefly comment on the proof of Theorem \ref{main}. While Sapir's methods are largely combinatorial, we obtain the lower bound using a probabilistic approach. We start with a result of Lalley \cite{Lalley-inter} asserting that the number of self-intersections $\iota(\gamma, \gamma)$ of a random geodesic $\gamma$  is essentially proportional to the square of its length. We obtain the desired lower bound from the facts that the number of geodesics of length $\le L$ grows like $\frac{e^L}L$, and that the number of times that the mapping class group orbit of a generic of length $\le L$ meets the set of all curves of length $\le L$ is bounded above by a polynomial in $L$.

To obtain the upper bound we associate to every curve $\gamma$ with $\iota(\gamma,\gamma)\le K$ a hyperbolic metric $\sigma_{\gamma}$ such that the $\sigma_\gamma$-geodesic corresponding to $\gamma$ has length at most $c_{S} \cdot \sqrt{K}$, for $c_{S}$ a constant depending only on $S$. It follows that, at least as long as $\gamma$ is filling, $\sigma_{\gamma}$ has injectivity radius at least $\ge e^{-c_S\cdot\sqrt{K}}$. We obtain the desired upper bound by approximating the $e^{-c_S\cdot\sqrt{K}}$-thick part of moduli space by a $\delta$-net $\CC$ (for some small $\delta>0$) whose cardinality grows polynomially with $K$. It follows that for each $\gamma$ with $\iota(\gamma, \gamma)\leq K$, $\gamma$ can be realized on one of the surfaces in the net $\CC$ with length roughly $\le c_{S}\cdot \sqrt{K}$, and thus we obtain the desired bound by considering all curves with length $ \leq c_{S} \cdot \sqrt{K}$ on any of the points in the $\delta$-net.

A metric $\sigma_\gamma$ with the needed properties has been constructed by the first author, Gaster, Patel, and Sapir \cite{AGPS}. However, if we were to use the metric provided by these authors, we would get an exponent in the upper bound which would be growing faster than $\sqrt{g}$ when we change $g$. This is why we provided an alternative construction, using circle packings, of the desired hyperbolic metric. We prove:

\begin{sat} \label{good metric}  
Let $S$ be a surface of finite topological type and with $\chi(S)<0$. For every closed curve $\gamma$ there is a hyperbolic metric $\rho$ on $S$ with respect to which the geodesic homotopic to $\gamma$ has length bounded by
\[ \ell_{\rho}(\gamma) \leq 4 \sqrt{2|\chi(S)| \cdot \iota(\gamma,\gamma)}.\]
\end{sat}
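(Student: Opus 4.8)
The plan is to realize $\gamma$ on $S$ efficiently using a circle packing adapted to its self-intersection pattern. Start by putting $\gamma$ in minimal position, so it is an immersed closed curve with exactly $\iota(\gamma,\gamma)$ transverse double points. Regard the image of $\gamma$ as a $4$-valent graph $G$ embedded in $S$: its vertices are the $n := \iota(\gamma,\gamma)$ self-intersection points and its edges are the arcs of $\gamma$ between consecutive crossings (when $\gamma$ is simple, handle this degenerate case separately by a direct construction, e.g.\ realizing $\gamma$ as a short curve on a pair of pants or one-holed torus). An Euler-characteristic count gives $V = n$, $E = 2n$, and hence the complementary regions $F$ of $G$ in $S$ satisfy $V - E + F = \chi(S)$, so $F = n + \chi(S) \leq n$. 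The key idea is to build a hyperbolic metric in which each complementary face is a round hyperbolic object of controlled size and each edge of $G$ is a geodesic arc of length $O(1)$, so that $\gamma$, being a concatenation of $2n$ such arcs, has length $O(n)$ — wait, that only gives a linear bound; the $\sqrt{n}$ saving must come from the fact that we can afford faces of diameter only $O(1/\sqrt{n})$ in a \emph{unit-area} normalization, or equivalently that we should \emph{not} normalize and instead let the Gauss–Bonnet area $2\pi|\chi(S)|$ be distributed among $\Theta(n)$ faces, each of diameter $O(1/\sqrt n)$, through which each of the $2n$ edges passes — giving total length $2n \cdot O(1/\sqrt n) = O(\sqrt n)$.

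Concretely, I would invoke the circle packing theorem (Koebe–Andreev–Thurston) for the triangulation obtained by coning each face of $G$ to an interior vertex, producing a hyperbolic metric on $S$ in which the combinatorics of $G$ is realized by a packing of circles whose tangency graph is (dual to) $G$. The crucial quantitative input is a \emph{ring lemma}–type estimate: because the triangulation has bounded vertex degree away from the original $4$-valent vertices — or more precisely because we can subdivide to make all degrees bounded by a universal constant — the radii of adjacent circles are comparable up to a universal multiplicative constant, so no circle is forced to be much smaller than its neighbors. Since the total hyperbolic area is exactly $2\pi|\chi(S)|$ and there are $\Theta(n)$ circles of comparable area, each circle has hyperbolic radius $\asymp 1/\sqrt{n}$, i.e.\ roughly $\sqrt{2\pi|\chi(S)|/(\pi n)} = \sqrt{2|\chi(S)|/n}$. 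Routing $\gamma$ through this packing — each edge of $G$ crossing $O(1)$ circles — bounds $\ell_\rho(\gamma)$ by $(2n) \cdot C \sqrt{2|\chi(S)|/n} = 2C\sqrt{2|\chi(S)| n}$, and one then checks that the geometry can be arranged so that the constant $C$ is close to $2$, yielding the stated $4\sqrt{2|\chi(S)|\cdot\iota(\gamma,\gamma)}$.

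The main obstacle is the ring-lemma / radius-comparability step: the classical ring lemma controls how small a circle can be relative to a \emph{single} neighbor in a packing of the plane or disk, but here I need a \emph{global} lower bound on all radii simultaneously on a closed hyperbolic surface, which requires either a compactness argument on the (finite-dimensional) space of such packings or an explicit hyperbolic ring lemma with constants that do not degrade as $n\to\infty$. A clean way around this is to avoid sharp circle-packing estimates entirely: triangulate $S$ by an (abstract) triangulation $T$ refining $G$ with $\Theta(n)$ triangles and all degrees bounded, take the hyperbolic structure making every triangle equilateral of area $2\pi|\chi(S)|/(\#\text{triangles})$ (this exists and is unique by Gauss–Bonnet once the cone angles are checked to be $2\pi$ — which forces a mild combinatorial regularity condition on $T$ that can be arranged), and observe each equilateral triangle has side length $\asymp\sqrt{|\chi(S)|/n}$; then $\gamma$ crosses $O(n)$ triangles and picks up length $O(\sqrt{|\chi(S)| n})$. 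Either route works; the circle-packing version is what is announced in the introduction, so I would present that one and relegate the degree-control subdivision and the comparability constant to a pair of lemmas.
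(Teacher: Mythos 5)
Your setup (minimal position, the $4$-valent graph $G$ with $n=\iota(\gamma,\gamma)$ vertices, completing it to a triangulation and applying Koebe's discrete uniformization so that the triangulation is the nerve of a circle packing) is exactly the paper's starting point. But the quantitative heart of your argument has a genuine gap, and you have identified it yourself: you need every circle to have radius $\asymp\sqrt{|\chi(S)|/n}$, and no ring-lemma-type statement gives this. The ring lemma only compares \emph{adjacent} radii; iterating it degrades the constant with the combinatorial distance, so it cannot yield a global two-sided bound uniform in $n$, and indeed the claim ``all radii are comparable to $1/\sqrt n$'' is false in general: if $\gamma$ is very dense in one subsurface and sparse elsewhere, the packing is forced to be highly non-uniform. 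Your proposed remedies do not close this. A ``compactness argument on the space of packings'' has no fixed compact space to work with as $n\to\infty$. The equilateral-triangulation variant is not a mild fix either: making every cone angle exactly $2\pi$ with all triangles equilateral of equal area forces \emph{every} vertex of the refined triangulation to have the same degree $6\pi/(\pi-A)$, an integrality-and-regularity constraint coupled to the number of triangles, and arranging this while containing the given graph $G$ is an unproved combinatorial problem; if instead you allow cone angles $\ge 2\pi$ you only get a singular cone metric, and comparing lengths in it to lengths in an honest hyperbolic metric is precisely the nontrivial step (this is essentially the route of \cite{AGPS}, which the paper deliberately avoids because of how the constants behave).

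The point you are missing is that no pointwise control of the radii is needed at all. In the packed metric $\rho$, each edge of $G$ joins the centers of two tangent circles, so its length is exactly $r_u+r_v$; summing over the $2n$ edges of the $4$-valent graph gives $\ell_\rho(\gamma)\le 4\sum_v r_v$. Now apply Cauchy--Schwarz, $\sum_v r_v\le\sqrt{n}\bigl(\sum_v r_v^2\bigr)^{1/2}$, and bound $\sum_v r_v^2$ using only that the disks are disjoint: a hyperbolic disk of radius $r$ has area $2\pi(\cosh r-1)\ge\pi r^2$, so $\pi\sum_v r_v^2\le\area(S,\rho)=2\pi|\chi(S)|$. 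This yields $\ell_\rho(\gamma)\le 4\sqrt{2|\chi(S)|\,n}$ with no ring lemma, no bounded-degree subdivision, and no uniformity of the packing --- which is exactly how the paper concludes. (Your heuristic that the packing should be roughly uniform is what the ``Sharpness'' discussion in the final section turns around: bounded degree plus the ring lemma is invoked there only to argue the bound is \emph{not} sharp, not to prove the theorem.)
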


\begin{bem} Following the notation of \cite{Gas}, define $\overline{m}_{K}(S)$ to be the max, taken over all curves $\gamma$ on $S$ with $K$ self-intersections, of the infimal length of $\gamma$ over all points in Teichm{\"u}ller space $\CT(S)$. That is,
$$ \overline{m}_{K}(S):= \max \left\{ \inf \left\{ \ell_{\rho}(\gamma): \rho \in \CT(S) \right\}: \iota(\gamma, \gamma) = K. \right\} $$

One interpretation of Theorem \ref{good metric} is that it provides an upper bound for $\overline{m}_{K}(S)$. Then let $\overline{M}_{K}$ denote the supremum, taken over all surfaces $S$ of finite type with $\chi(S)<0$, of $\overline{m}_{K}(S)$. Gaster proves that $\overline{M}_{K}$ grows at least linearly in $K$ \cite{Gas}, and here we remark that Theorem \ref{good metric} provides the corresponding linear upper bound. Indeed, if $\iota(\gamma, \gamma) =K$ on any surface, the absolute value of the Euler characteristic of the subsurface it fills is at most $K$. Thus, Theorem \ref{good metric} yields a hyperbolic metric with respect to which $\gamma$ has length $\le 4 \sqrt{2K \cdot K}= 4\sqrt{2} \cdot K$. We record this together with Gaster's lower bound as follows:

\begin{kor} \label{Gas} 
$$\frac{\log(3)}{3} \leq \frac{\overline{M}_{K}}{K} \leq  4 \sqrt{2}. $$

\end{kor}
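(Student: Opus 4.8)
The plan is to establish the two inequalities separately: the lower bound is exactly the content of Gaster's paper \cite{Gas}, which I would simply quote, while the upper bound is a short deduction from Theorem \ref{good metric}.

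For the upper bound, I would fix $K\ge 1$, an arbitrary surface $S$ of finite topological type with $\chi(S)<0$, and a curve $\gamma$ on $S$ with $\iota(\gamma,\gamma)=K$, and argue that $\inf_{\rho\in\CT(S)}\ell_{\rho}(\gamma)\le 4\sqrt 2\,K$. The first step is to reduce to the case that $\gamma$ fills $S$: if $\gamma$ fills a proper subsurface $\Sigma\subsetneq S$, then pinching the multicurve $\partial\Sigma$ produces a family of metrics in $\CT(S)$ along which $\ell_{\rho}(\gamma)$ converges to $\ell_{\rho_{0}}(\gamma)$ for a suitable $\rho_{0}\in\CT(\Sigma)$, so that $\inf_{\CT(S)}\ell_{\rho}(\gamma)\le\inf_{\CT(\Sigma)}\ell_{\rho_{0}}(\gamma)$, and one may replace $S$ by $\Sigma$. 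Assuming now that $\gamma$ fills $S$, the second step is the elementary estimate $|\chi(S)|\le K$: viewing $\gamma$ as a $4$-valent graph with $K$ vertices and $2K$ edges and capping off the disk components of its complement, an Euler characteristic count gives $\chi(S)=-K+d$, where $d\ge 0$ is the number of disk complementary regions, whence $|\chi(S)|\le K$. Feeding this into Theorem \ref{good metric} produces a hyperbolic metric $\rho$ on $S$ with
\[\ell_{\rho}(\gamma)\le 4\sqrt{2\,|\chi(S)|\cdot\iota(\gamma,\gamma)}\le 4\sqrt{2K\cdot K}=4\sqrt 2\,K.\]
Taking the infimum over $\CT(S)$, then the maximum over all $\gamma$ with $\iota(\gamma,\gamma)=K$, and then the supremum over all $S$ gives $\overline{M}_{K}\le 4\sqrt 2\,K$, i.e. $\overline{M}_{K}/K\le 4\sqrt 2$.

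For the lower bound I would invoke \cite{Gas} directly: Gaster constructs, for each $K$, a surface $S_{K}$ and a curve $\gamma_{K}$ on it with $\iota(\gamma_{K},\gamma_{K})=K$ whose length is at least $\tfrac{\log 3}{3}K$ in every hyperbolic metric on $S_{K}$, so that $\overline{M}_{K}\ge\overline{m}_{K}(S_{K})\ge\tfrac{\log 3}{3}K$. Combining the two estimates and dividing by $K$ yields the corollary.

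I do not anticipate a genuine obstacle, since the real work is already contained in Theorem \ref{good metric} and in \cite{Gas}. The only points requiring a modicum of care are the inequality $|\chi(\Sigma)|\le K$ for the subsurface $\Sigma$ filled by a curve with $K$ self-intersections (together with the standard fact that $\chi(\Sigma)<0$ once $K\ge 1$, so that Theorem \ref{good metric} applies), and the routine degeneration argument used to transfer a length bound from such a filling subsurface to metrics on the ambient surface.
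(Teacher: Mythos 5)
Your proposal is correct and follows essentially the same route as the paper: the lower bound is quoted from \cite{Gas}, and the upper bound comes from the observation that the subsurface filled by a curve with $K$ self-intersections has $|\chi|\le K$, fed into Theorem \ref{good metric}. You merely spell out two details the paper leaves implicit (the Euler characteristic count via the $4$-valent graph and the pinching argument reducing to the filled subsurface), which is fine.
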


\end{bem}

The paper is organised as follows. In section \ref{sec-lalley} we recall Lalley's theorems on random geodesics, and in section \ref{sec-lower bound} we use these results to obtain the lower bound in Theorem \ref{main}.  In section \ref{sec-metric} we associate to each $\gamma$ a hyperbolic metric on $S$ satisfying the conclusion of Theorem \ref{good metric}. In section \ref{sec-net} we bound the cardinality of some maximal $\delta$-net in the $\epsilon$-thick part of moduli space with respect to the symmetric Lipschitz metric. In section \ref{sec-upper} we complete the proof of Theorem \ref{main}, and we conclude with some comments and observations in section \ref{sec-final} which will hopefully amuse the reader.

\section{The liouville current and random geodesics}\label{sec-lalley}
In this section we recall some facts about random geodesics. We will formulate our results using the language of geodesic currents. We refer the reader to \cite{Bonahon86,Bonahon88,Javi-Chris} for background on currents.

As in the introduction, let $S$ be a closed surface of genus $g\ge 2$ and $X$ a hyperbolic surface with underlying topological surface $S$. Denote by $\CC=\CC_X$ the space of all currents on $X$, endowed with the weak-*-topology. Every closed geodesic in $X$ determines a current. In this way we can identify the set $\CS$ of all homotopy classes of primitive closed essential curves in $X$ as a subset of $\CC$. In fact, the set consisting of all positive multiples of elements in $\CS$ is dense in $\CC$. 

The first key fact we will need about $\CC$ is that the function on $\CS$ which associates to each curve the length in $X$ of the corresponding geodesic extends to a continuous function, the {\em length function}
$$\ell_X:\CC\to\BR.$$
The function $\ell_X$ is homogenous under the action of $\BR_+$, meaning that 
$$\ell_X(t\cdot\lambda)=t\cdot\ell_X(\lambda)$$
for $\lambda\in\CC$ and $t>0$. In particular, we can identify the set 
$$\CC^1_X=\{\lambda\in\CC\vert\ell_X(\lambda)=1\}$$
of unit length currents with the space $P\CC$ of projective currents. This implies that $\CC^1_X$ is compact. 

Similarly, the function on $\CS\times\CS$ which associates to a pair $(\gamma,\eta)$ of curves their geometric intersection number $\iota(\gamma,\eta)$ extends continuously to the so-called intersection form
$$\iota:\CC\times\CC\to\BR.$$
The intersection form is homogenous on both factors: $\iota(t\cdot\lambda,s\cdot\mu)=st\cdot\iota(\lambda,\mu)$.

There is a particularly important current associated to the hyperbolic metric on $X$, called the \textit{Liouville current} and denoted here by $\lambda_X$. A defining property of $\lambda_{X}$ is that it links the length function $\ell_X$ and the intersection form $\iota(\cdot,\cdot)$. More precisely, the length of any current is the same as its intersection with $\lambda_X$:
$$\ell_X(\lambda)=\iota(\lambda,\lambda_X)\text{ for all }\lambda\in\CC.$$
In particular we have
$$\ell_X(\lambda_X)=\iota(\lambda_X,\lambda_X)=\pi^2|\chi(S)|,$$
where we obtain the last equality from \cite{Bonahon88}. We denote by
$$\lambda_X^1=\frac 1{\pi^2|\chi(S)|}\lambda_X\in\CC^1_X,$$
the unit length current associated to $\lambda_X$.

Unit length currents can be interpreted as probability measures on $T^1X$ invariant under the geodesic flow. From this point of view, the Liouville current arises from the Liouville measure, which is the measure of maximal entropy of the geodesic flow. In particular, it follows from the work of Lalley \cite{Lalley-limit} that randomly chosen geodesics on $X$ converge, once considered as currents, to $\lambda_X$. 

To make this precise, let $\CS_X(L)$ be the set of all geodesics in $X$ of length $\le L$ and recall that by Huber's theorem \cite{Buser} its cardinality behaves like
\begin{equation}\label{eq-margulis}
\vert\CS_X(L)\vert\sim\frac{e^L}L,
\end{equation}
meaning that the ratio between both quantities tends to $1$. Now, reinterpreting a theorem of Lalley \cite{Lalley-limit} one sees that as $L$ grows, the measures 
$$\sigma_L=\frac 1{\vert\CS_X(L)\vert}\sum_{\gamma\in\CS_X(L)}\delta_{\frac 1{\ell_X(\gamma)}\gamma}$$
on $\CC^1(X)$ converge in the weak-*-topology on the space of measures to the Liouville measure $\lambda_X^1$, that is
\begin{equation}\label{eq-lalley}
\lim_{L\to\infty}\sigma_L=\delta_{\lambda_X^1}.
\end{equation}
Here $\delta_c$ is the Dirac measure on $\CC(X)$ centered on the current $c$.

Note that combining \eqref{eq-lalley} and \eqref{eq-margulis} we get:

\begin{lem}\label{lem-lalley1}
We have
$$\left\vert\left\{\frac 1{\ell_X(\gamma)}\gamma\in U\middle\vert \ell_X(\gamma)\le L\right\}\right\vert\sim\frac{e^L}L$$
for every open neighborhood $U\subset\CC^1_X$ of $\lambda_X^1$.\qed
\end{lem}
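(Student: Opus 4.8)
The claim to be proven is Lemma \ref{lem-lalley1}: that for every open neighborhood $U \subset \CC^1_X$ of the Liouville current $\lambda_X^1$, the number of geodesics $\gamma$ with $\ell_X(\gamma) \le L$ whose normalized current $\frac{1}{\ell_X(\gamma)}\gamma$ lies in $U$ is asymptotically $\frac{e^L}{L}$.

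\textbf{Proof proposal.}

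The plan is to combine the two ingredients already recorded in this section: Huber's asymptotic \eqref{eq-margulis}, which gives $|\CS_X(L)| \sim e^L/L$, and Lalley's equidistribution statement \eqref{eq-lalley}, which says that the averaged Dirac masses $\sigma_L$ converge weak-* to $\delta_{\lambda_X^1}$. Fix an open neighborhood $U$ of $\lambda_X^1$ in $\CC^1_X$. Write $N_U(L)$ for the cardinality appearing in the statement, i.e. the number of $\gamma \in \CS_X(L)$ with $\frac{1}{\ell_X(\gamma)}\gamma \in U$. Then by definition of $\sigma_L$ we have $\sigma_L(U) = N_U(L)/|\CS_X(L)|$, so it suffices to show $\sigma_L(U) \to 1$ as $L \to \infty$, since then $N_U(L) = \sigma_L(U)\,|\CS_X(L)| \sim 1 \cdot \frac{e^L}{L}$ by \eqref{eq-margulis}.

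To extract $\sigma_L(U) \to 1$ from weak-* convergence, I would use the portmanteau theorem: weak-* convergence of probability measures $\sigma_L \to \delta_{\lambda_X^1}$ on the compact metrizable space $\CC^1_X$ implies $\liminf_{L\to\infty}\sigma_L(U) \ge \delta_{\lambda_X^1}(U)$ for every open set $U$. Since $\lambda_X^1 \in U$, we have $\delta_{\lambda_X^1}(U) = 1$, hence $\liminf_{L\to\infty}\sigma_L(U) \ge 1$; as each $\sigma_L$ is a probability measure, $\sigma_L(U) \le 1$, and therefore $\lim_{L\to\infty}\sigma_L(U) = 1$. (Here compactness of $\CC^1_X$, identified with the space of projective currents $P\CC$ as noted above, guarantees the relevant measures are Borel probability measures on a nice space so that the portmanteau theorem applies; alternatively one can choose a continuous function $f\colon \CC^1_X \to [0,1]$ supported in $U$ with $f(\lambda_X^1)=1$, and note $\sigma_L(U) \ge \int f\, d\sigma_L \to f(\lambda_X^1) = 1$.)

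I do not anticipate a serious obstacle here; the lemma is essentially a bookkeeping combination of two cited results. The one point requiring mild care is the passage from the weak-* statement \eqref{eq-lalley}, which a priori controls only integrals of continuous functions, to a statement about the measure of the open set $U$ — handled by the portmanteau theorem or, more concretely, by inserting a bump function as above. Everything else is immediate from the definitions of $\sigma_L$ and $|\CS_X(L)|$ together with \eqref{eq-margulis}.
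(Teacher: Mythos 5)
Your proposal is correct and is exactly the argument the paper intends: the paper states the lemma with a \qed as an immediate combination of \eqref{eq-margulis} and \eqref{eq-lalley}, and your write-up simply fills in the standard portmanteau (or bump-function) step to pass from weak-* convergence to $\sigma_L(U)\to 1$. No gaps.
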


A corollary of Lalley's formula \eqref{eq-lalley} is that the number of intersections of a randomly chosen geodesic is basically proportional to the square of the length \cite{Lalley-note}. More precisely, Lalley proves that for every $\delta>0$ we have
$$\lim_{L\to\infty}\frac 1{\vert\CS_X(L)\vert}\left\vert\left\{\gamma\in\CS_X(L)\text{ with }\left\vert\frac{\iota(\gamma,\gamma)}{\ell_X(\gamma)^2}-\iota(\lambda_X^1,\lambda_X^1)\right\vert\ge\delta
\right\}\right\vert=0$$
Again, combining this result with \eqref{eq-margulis} and denoting
\begin{equation}\label{eq-defik}
\iota(\lambda_X^1,\lambda_X^1)=\frac 1{\pi^2|\chi(S)|}\stackrel{\text{def}}=:\kappa
\end{equation}
we get:

\begin{lem}\label{lem-lalley2}
Given a hyperbolic surface $X$ and $\delta>0$ consider the set
$$\CA(X,\delta)=\left\{\gamma\in\CS\text{ with }\left\vert\frac{\iota(\gamma,\gamma)}{\ell_X(\gamma)^2}-\kappa\right\vert\le\delta\right\},$$
where $\kappa$ is as in \eqref{eq-defik}. Then we have that
$$\left\vert\left\{\gamma\in\CA(C,\delta)\vert \ell_X(\gamma)\le L\right\}\right\vert\sim\frac{e^L}L$$
for every $\delta>0$.\qed
\end{lem}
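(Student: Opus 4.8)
The plan is to read this off directly from Lalley's statement quoted immediately above, together with Huber's asymptotic \eqref{eq-margulis}; essentially no new work is needed.

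First I would rewrite the quantity being counted as a complement inside $\CS_X(L)$. For a geodesic $\gamma$ with $\ell_X(\gamma)\le L$, membership in $\CA(X,\delta)$ fails exactly when $\bigl|\iota(\gamma,\gamma)/\ell_X(\gamma)^2-\kappa\bigr|>\delta$; writing $\CB_X(L,\delta)$ for the set of such $\gamma$, we get
$$\bigl|\{\gamma\in\CA(X,\delta):\ell_X(\gamma)\le L\}\bigr|=|\CS_X(L)|-|\CB_X(L,\delta)|.$$
Since $\CB_X(L,\delta)$ is contained in the set appearing in Lalley's limit (which is stated with the non-strict inequality $\ge\delta$), that limit gives $|\CB_X(L,\delta)|=o(|\CS_X(L)|)$ as $L\to\infty$. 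Hence $\bigl|\{\gamma\in\CA(X,\delta):\ell_X(\gamma)\le L\}\bigr|=|\CS_X(L)|\,(1-o(1))\sim|\CS_X(L)|$, and Huber's theorem \eqref{eq-margulis}, which gives $|\CS_X(L)|\sim e^L/L$, then yields the claim.

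There is no real obstacle here: all of the analytic content lives in Lalley's theorem (already invoked) and Huber's theorem (already quoted), and the deduction is a one-line counting argument. The only minor points to keep straight are the strict-versus-non-strict inequality — harmless, since $\{>\delta\}\subseteq\{\ge\delta\}$ — and the bookkeeping that $\CS_X(L)\subseteq\CS$ consists of primitive closed geodesics, so that $\CA(X,\delta)\cap\{\ell_X\le L\}$ and $\CS_X(L)$ are genuinely being counted inside the same set.
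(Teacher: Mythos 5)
Your argument is exactly the paper's: the lemma is stated with a \qed precisely because it follows by combining Lalley's limit (the bad set where $\bigl|\iota(\gamma,\gamma)/\ell_X(\gamma)^2-\kappa\bigr|\ge\delta$ has vanishing proportion) with Huber's asymptotic \eqref{eq-margulis}, which is the one-line counting deduction you spell out. Your remarks on strict versus non-strict inequality and on counting inside $\CS$ are correct and harmless refinements; nothing is missing.
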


\section{The lower bound}\label{sec-lower bound}

Recall that $\CO(K,S)$ is the number of mapping class group orbits of curves $\gamma$ in $S$ with self-intersection number $\iota(\gamma,\gamma)\le K$. In this section we prove: 

\begin{prop}\label{sat-lower bound}
Let $S$ be a closed surface with $\chi(S)<0$. For every $\delta>0$ there is $K_{\delta,S}$ with 
$$\CO(K,S)\ge e^{(\sqrt{\pi^2|\chi(S)|}-\delta)\sqrt K}$$
for every $K\ge K_{\delta,S}$.
\end{prop}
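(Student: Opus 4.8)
The plan is to count mapping class group orbits by counting geodesics of bounded length on a fixed hyperbolic surface $X$, using Lemma \ref{lem-lalley2} to control self-intersection numbers, and then controlling how often a single orbit can be hit.

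First I would fix a hyperbolic structure $X$ on $S$ and a small auxiliary parameter $\delta'>0$ (to be related to $\delta$ at the end). By Lemma \ref{lem-lalley2}, the set of $\gamma\in\CA(X,\delta')$ with $\ell_X(\gamma)\le L$ has cardinality asymptotic to $e^L/L$. For each such $\gamma$ we have $\iota(\gamma,\gamma)\le(\kappa+\delta')L^2$, so if we set $L=L(K)=\sqrt{K/(\kappa+\delta')}$, every such $\gamma$ satisfies $\iota(\gamma,\gamma)\le K$. Hence the number of curves on $S$ with at most $K$ self-intersections that arise this way is at least $e^{L(K)}/L(K)$ for $K$ large, and $e^{L(K)} = e^{\sqrt{K/(\kappa+\delta')}} = e^{\sqrt{\pi^2|\chi(S)|/(1+\pi^2|\chi(S)|\delta')}\cdot\sqrt K}$, which is $e^{(\sqrt{\pi^2|\chi(S)|}-\delta/2)\sqrt K}$ once $\delta'$ is small enough.

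The main obstacle — and the heart of the argument — is that distinct geodesics on $X$ may lie in the same mapping class group orbit, so we must divide by the maximal orbit multiplicity. Here I would invoke the fact that for a generic curve (in particular, for a filling curve, which curves in $\CA(X,\delta')$ of large length are, since their self-intersection number grows), the number of elements of its mapping class group orbit that have $X$-length at most $L$ is bounded by a polynomial in $L$. Concretely: if $\phi\gamma$ has $\ell_X(\phi\gamma)\le L$ then $\ell_{\phi^{-1}X}(\gamma)\le L$, so counting orbit representatives of bounded length amounts to counting lattice points $\phi^{-1}X$ in the region $\{Y\in\CT(S):\ell_Y(\gamma)\le L\}$ of Teichmüller space; since $\gamma$ is filling, $\ell_\gamma$ is a proper function on $\CT(S)$ and (after quotienting by the finite stabilizer of $\gamma$) this region has finite, indeed polynomially-in-$L$ bounded, number of $\mathrm{MCG}$-translates of $X$ meeting it — this is where a result like Mirzakhani's count of mapping class group orbit points, or a direct volume estimate, enters. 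Dividing $e^{L(K)}/L(K)$ by this polynomial factor only changes the subexponential prefactor, so the exponential rate $\sqrt{\pi^2|\chi(S)|}-\delta/2$ survives, and we absorb the loss into $\delta$.

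Finally I would assemble the estimates: choose $\delta'$ and $K_{\delta,S}$ so that for $K\ge K_{\delta,S}$ the asymptotics in Lemma \ref{lem-lalley2}, the polynomial orbit bound, and the relation between $\delta'$ and $\delta$ all hold simultaneously, yielding $\CO(K,S)\ge e^{(\sqrt{\pi^2|\chi(S)|}-\delta)\sqrt K}$. I expect the delicate points to be (i) making the "generic curves are filling" step uniform enough that it applies to essentially all the $e^{L}/L$ curves counted, and (ii) pinning down the polynomial orbit-multiplicity bound with a clean reference; both are quantitative refinements rather than conceptual difficulties.
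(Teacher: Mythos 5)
Your outline has the same skeleton as the paper (count curves of length $\le L$ via Lalley and Huber, relate $L$ to $\sqrt K$ through $\CA(X,\delta')$, then divide by an orbit multiplicity that is only polynomial in $L$), but the step you defer to a reference is precisely where the argument has a genuine gap. First, a small but real error: membership in $\CA(X,\delta')$ does not imply filling -- a curve lying in a proper subsurface can have arbitrarily large self-intersection number, so ``their self-intersection number grows'' does not give fillingness, and Lemma \ref{lem-lalley2} says nothing about the shape of the curves it counts. Second, and more seriously, even for filling curves the multiplicity bound you need must be \emph{uniform} over all curves being counted, whose lengths are themselves of order $L$, so the relevant orbits change as $L\to\infty$. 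Neither of your proposed sources provides this: Mirzakhani-type orbit counting is an asymptotic for a \emph{fixed} orbit, with a multiplicative constant depending on the orbit; and the ``direct volume estimate'' fails because for a filling curve $\gamma$ the sublevel set $\{Y\in\CT(S)\,\vert\,\ell_Y(\gamma)\le L\}$ is compact but can reach a definite fraction of $L$ deep into the thin part (a curve crossing the collar of a geodesic of length $e^{-cL}$ a bounded number of times can still have length $\le L$), so this region may contain on the order of $e^{cL}$ translates $\phi^{-1}(X)$; dividing $e^L/L$ by such a factor destroys the exponential rate. Qualitative fillingness and properness of $\ell_\gamma$ are simply not enough; one needs a quantitative, uniform version of ``filling''.

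This is exactly what the paper's Lemmas \ref{lem1} and \ref{lem2} supply, and it is the main content of the proof rather than a routine refinement. The paper restricts to $\CG(X,\alpha,\epsilon)=\{\gamma\,\vert\,\iota(\gamma,\lambda)\ge\epsilon\,\ell_X(\gamma)\,\iota(\alpha,\lambda)\ \text{for all currents }\lambda\}$, i.e.\ curves whose normalized currents are uniformly comparable to the Liouville current; by compactness of $\CC^1_X$ and Lalley's equidistribution this set still carries $\sim e^L/L$ curves of length $\le L$ (Lemma \ref{lem1}). For such $\gamma$ one gets $\ell_X(\phi(\gamma))\ge\epsilon\,\ell_X(\gamma)\,\ell_{\phi^{-1}(X)}(\alpha)$ for every mapping class $\phi$, so $\ell_X(\phi(\gamma))\le L$ forces, via Theorem E of \cite{LRT}, the bound $d_{\Lip}(X,\phi^{-1}(X))\le K\log L+K$, and the uniform polynomial multiplicity bound then follows from the exponential growth of mapping class group orbits in Teichm\"uller space \cite{ABEM} (Lemma \ref{lem2}). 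So your plan becomes correct once the counting is carried out over $\CG(X,\alpha,\epsilon)\cap\CA(X,\delta')$ with this uniform multiplicity lemma in place of the appeal to fillingness; the remaining bookkeeping relating $L$, $K$, $\delta'$ and $\delta$ in your proposal agrees with the paper.
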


Before launching into the proof of this proposition, we require two lemmas. Notation is as in the previous section.

\begin{lem}\label{lem1}
Let $\alpha\in\CC$ be a filling current. Then there is $\epsilon>0$ such that the set of curves
$$\CG(X,\alpha,\epsilon)=\left\{\gamma\in\CS\middle\vert
\frac{\iota(\gamma,\lambda)}{\ell_X(\gamma)\cdot\iota(\alpha,\lambda)}>\epsilon\text{ for all }\lambda\in\CC\right\}$$
satisfies $\vert\{\gamma\in\CG(X,\alpha,\epsilon)\vert\ell_X(\gamma)\le L\}\vert\sim\frac{e^L}L.$
\end{lem}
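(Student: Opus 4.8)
The plan is to exploit the compactness of the space of projective currents together with Lemma \ref{lem-lalley1}. First I would reformulate the defining condition of $\CG(X,\alpha,\epsilon)$ in terms of the unit-length current $\frac{1}{\ell_X(\gamma)}\gamma$: writing $\bar\gamma = \frac{1}{\ell_X(\gamma)}\gamma \in \CC^1_X$, the condition $\frac{\iota(\gamma,\lambda)}{\ell_X(\gamma)\cdot\iota(\alpha,\lambda)}>\epsilon$ for all $\lambda\in\CC$ becomes $\iota(\bar\gamma,\lambda) > \epsilon\,\iota(\alpha,\lambda)$ for all $\lambda$ with $\iota(\alpha,\lambda)>0$ (note that since $\alpha$ is filling, $\iota(\alpha,\lambda)>0$ for every nonzero $\lambda\in\CC$, so this quantifies over all nonzero currents, and by homogeneity it suffices to check it on a compact slice, e.g. $\{\lambda : \iota(\alpha,\lambda)=1\}$ or $\CC^1_X$). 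So $\CG(X,\alpha,\epsilon)$ is exactly the set of curves whose associated unit current lies in the set
\[
U_\epsilon = \left\{ \mu \in \CC^1_X \;\middle|\; \iota(\mu,\lambda) > \epsilon\,\iota(\alpha,\lambda)\ \text{for all }\lambda \in \CC\setminus\{0\} \right\}.
\]

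The key observation is then that $\lambda_X^1$ itself lies in $U_\epsilon$ for $\epsilon$ small enough, and that $U_\epsilon$ is open. For the first point: since $\alpha$ and $\lambda_X^1$ are both filling currents, the function $\lambda \mapsto \iota(\lambda_X^1,\lambda)/\iota(\alpha,\lambda)$ is continuous and strictly positive on the compact set $\CC^1_X$ (it is well-defined there because $\iota(\alpha,\lambda)>0$ for $\lambda\neq 0$), hence bounded below by some $2\epsilon_0>0$; by homogeneity the same bound $\iota(\lambda_X^1,\lambda) \ge 2\epsilon_0\,\iota(\alpha,\lambda)$ holds for all $\lambda\in\CC$, so $\lambda_X^1 \in U_{\epsilon_0}$. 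For openness: one checks that the complement of $U_\epsilon$ in $\CC^1_X$ is closed, again using compactness of $\CC^1_X$ to turn the "for all $\lambda$" condition into an inequality for the minimum of a jointly continuous function. Once we know $U_{\epsilon_0}$ is an open neighborhood of $\lambda_X^1$ in $\CC^1_X$, Lemma \ref{lem-lalley1} applied with $U = U_{\epsilon_0}$ gives immediately
\[
\left| \{ \gamma \in \CG(X,\alpha,\epsilon_0) : \ell_X(\gamma) \le L \} \right| = \left| \left\{ \tfrac{1}{\ell_X(\gamma)}\gamma \in U_{\epsilon_0} \;\middle|\; \ell_X(\gamma)\le L \right\} \right| \sim \frac{e^L}{L},
\]
which is the claim with $\epsilon = \epsilon_0$.

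The main obstacle, and the place to be careful, is the interchange between the "for all $\lambda \in \CC$" quantifier (an a priori non-compact condition, since $\CC$ is infinite-dimensional) and the compactness of $\CC^1_X \cong P\CC$. The resolution is the homogeneity of $\iota$ in its second argument: the inequality $\iota(\mu,\lambda) \gtrless \epsilon\,\iota(\alpha,\lambda)$ is scale-invariant in $\lambda$, so it need only be tested on the compact slice $\CC^1_X$, after which continuity of $\iota$ and of $\ell_X$ does the rest. A secondary subtlety is to make sure that $\iota(\alpha,\lambda)$ never vanishes on $\CC^1_X$ — this is precisely where the hypothesis that $\alpha$ is \emph{filling} is used, and it is what makes the ratio $\iota(\mu,\lambda)/\iota(\alpha,\lambda)$ a bona fide continuous function on the compact set. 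Everything else is a routine topological argument.
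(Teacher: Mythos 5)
Your proposal is correct and follows essentially the same route as the paper: use fillingness of $\alpha$ and $\lambda_X^1$ together with compactness of $\CC^1_X$ to get a uniform lower bound on the ratio $\iota(\mu,\lambda)/\iota(\alpha,\lambda)$ near $\lambda_X^1$, and then feed the resulting open neighborhood into Lemma \ref{lem-lalley1}. The only (harmless) cosmetic difference is that you observe the normalized $\CG$-set equals the open set $U_{\epsilon_0}$, so Lemma \ref{lem-lalley1} alone suffices, whereas the paper works with an inclusion and also invokes \eqref{eq-margulis} for the matching upper bound.
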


Recall that a current $\alpha$ is filling if it has positive intersection number $\iota(\alpha,\mu)$ with all non-zero currents $\mu$. 

\begin{proof}
Consider the continuous map
\begin{equation}\label{eq-I need a coffee}
\CC^1_X\times\CC^1_X\to\BR,\ \ (\mu,\lambda)\mapsto\frac{\iota(\mu,\lambda)}{\iota(\alpha,\lambda)},
\end{equation}
and note that, since the normalized Liouville current $\lambda_X^1$ of $X$ is filling, and since $\CC^1_X$ is compact, the quantity
$$\epsilon=\frac 12\min_{\lambda\in\CC^1_X}\frac{\iota(\lambda_X^1,\lambda)}{\iota(\alpha,\lambda)}$$
is positive. Now, continuity of \eqref{eq-I need a coffee} together with the compactness of the domain imply that there is an open neighborhood $U$ of $\lambda_X^1$ in $\CC^1$ with 
$$\frac{\iota(\mu,\lambda)}{\iota(\alpha,\lambda)}\ge\epsilon$$
for every $\mu\in U$ and every $\lambda\in\CC^1$. Note that this implies that
\begin{equation}\label{eq-still need coffee1}
\left\{\gamma\in\CS(X)\middle\vert\frac 1{\ell_X(\gamma)}\gamma\in U\right\}\subset\CG(X,\alpha,\epsilon).
\end{equation}
The claim now follows from Lemma \ref{lem-lalley1} and \eqref{eq-margulis}.
\end{proof}

\begin{lem}\label{lem2}
Given a hyperbolic surface $X$, a full marking $\alpha$ and $\epsilon>0$, there is a constant $C$ with 
$$\vert \left\{\eta\in\Map(X)\cdot\gamma\vert\ell_X(\eta)\le L\right\}\vert\le C\cdot L^C,$$
for every $L$ and every $\gamma\in\CG(X,\alpha,\epsilon)$.
\end{lem}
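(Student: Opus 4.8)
The plan is to show that if $\eta$ lies in the orbit $\Map(X)\cdot\gamma$ of a fixed $\gamma\in\CG(X,\alpha,\epsilon)$ and $\ell_X(\eta)\le L$, then the mapping class realizing $\eta$ must push the marking $\alpha$ to a multicurve all of whose components have length $O(L)$; since $X$ carries only polynomially many simple closed geodesics of bounded length, this will bound the orbit count polynomially. The key estimate is a "transfer" inequality. Write $\alpha=\alpha_1\cup\dots\cup\alpha_n$ for the simple closed curves comprising the marking; as they fill $S$, the associated current $\alpha$ is filling, so $\iota(\alpha,\lambda)>0$ for all nonzero $\lambda\in\CC$, and the defining inequality of $\CG(X,\alpha,\epsilon)$ together with homogeneity and continuity of $\iota$ gives
\[\iota(\gamma,\lambda)\ \ge\ \epsilon\,\ell_X(\gamma)\,\iota(\alpha,\lambda)\qquad\text{for all }\lambda\in\CC.\]
For $\phi\in\Map(X)$, the $\Map(X)$-invariance of the intersection form, in the form $\iota(\gamma,\phi^{-1}\mu)=\iota(\phi\gamma,\mu)$ and $\iota(\alpha,\phi^{-1}\mu)=\iota(\phi\alpha,\mu)$, turns this into $\iota(\phi\gamma,\mu)\ge\epsilon\,\ell_X(\gamma)\,\iota(\phi\alpha,\mu)$ for all $\mu\in\CC$; specializing to $\mu=\lambda_X$ and using $\ell_X(\cdot)=\iota(\cdot,\lambda_X)$ gives
\[\ell_X(\phi\gamma)\ \ge\ \epsilon\,\ell_X(\gamma)\,\ell_X(\phi\alpha)\ =\ \epsilon\,\ell_X(\gamma)\sum_{i=1}^{n}\ell_X(\phi\alpha_i).\]
Since $\ell_X(\gamma)\ge\syst(X)>0$, whenever $\ell_X(\phi\gamma)\le L$ each $\phi\alpha_i$ is a simple closed curve of length $\le C_1L$, where $C_1=(\epsilon\,\syst(X))^{-1}$ depends only on $X$ and $\epsilon$.

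Next I would convert this into a count. Choosing, for each $\eta\in\Map(X)\cdot\gamma$, a mapping class $\phi_\eta$ with $\phi_\eta\gamma=\eta$, send $\eta$ to the multicurve $\phi_\eta(\alpha_1\cup\dots\cup\alpha_n)$; by the previous step this maps $\{\eta\in\Map(X)\cdot\gamma:\ell_X(\eta)\le L\}$ into the set of multicurves on $X$ with at most $n$ components, each a simple closed geodesic of length $\le C_1L$. If $\eta$ and $\eta'$ have the same image, then $\phi_{\eta'}^{-1}\phi_\eta$ preserves $\alpha_1\cup\dots\cup\alpha_n$ setwise; the group $G$ of mapping classes with this property is finite, because $\alpha$ fills $S$ and $\chi(S)<0$ (a standard consequence of the Alexander method: $\alpha$ cuts $S$ into disks, so $G$ embeds into the finite automorphism group of the resulting cell structure). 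Hence $\eta=\phi_\eta\gamma$ ranges over the finite set $\phi_{\eta'}\,G\cdot\gamma$, so the assignment has fibres of size at most $|G|$, whence
\[\bigl|\{\eta\in\Map(X)\cdot\gamma:\ell_X(\eta)\le L\}\bigr|\ \le\ |G|\cdot\bigl(1+N_X(C_1L)\bigr)^{n},\]
where $N_X(R)$ is the number of simple closed geodesics of length $\le R$ on $X$.

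It then remains to feed in the polynomial growth of $N_X$: by Mirzakhani $N_X(R)\sim c_X\,R^{6g-6}$, but all that is used is an inequality $N_X(R)\le c_X\,R^{6g-6}$, which already follows from the collar lemma since the Dehn--Thurston coordinates (relative to a fixed pants decomposition) of a simple closed geodesic of length $\le R$ are bounded linearly in $R$. Plugging this in, restricting to $L\ge1$, and absorbing the constantly-many $\eta$ with $\ell_X(\eta)\le1$ into the constant, one obtains $|\{\eta\in\Map(X)\cdot\gamma:\ell_X(\eta)\le L\}|\le C\,L^{C}$ with $C$ depending only on $X$, $\alpha$, and $\epsilon$, in particular not on $\gamma$, as required. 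I expect the only genuine content to be the transfer estimate: the strategy works precisely because it replaces the count of \emph{all} curves of length $\le L$ (there are $\asymp e^{L}/L$ of these, hopelessly many) by the count of \emph{simple} closed curves of length $\lesssim L$ (only polynomially many), and the hypothesis $\gamma\in\CG(X,\alpha,\epsilon)$, with $\alpha$ built from simple curves, is exactly what forces $\phi\alpha$ to be short once $\phi\gamma$ is; the finiteness of $G$ and the polynomial bound on $N_X$ are standard facts that would merely be quoted.
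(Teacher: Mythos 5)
Your proposal is correct, and its first half is exactly the computation in the paper: the defining inequality of $\CG(X,\alpha,\epsilon)$ plus invariance of $\iota$ and the identity $\ell_X(\cdot)=\iota(\cdot,\lambda_X)$ give $\ell_X(\phi\gamma)\ge\epsilon\,\ell_X(\gamma)\,\ell_X(\phi\alpha)$, and both you and the authors then use $\ell_X(\gamma)\ge\syst(X)$ to make the resulting constant independent of $\gamma$. Where you genuinely diverge is in how this length bound is converted into a polynomial count. The paper rewrites $\ell_X(\phi\alpha)=\ell_{\phi^{-1}(X)}(\alpha)$ and feeds it into Theorem E of Lenzhen--Rafi--Tao to get $d_{\Lip}(X,\phi^{-1}(X))\le K\log L+K$, then invokes the Athreya--Bufetov--Eskin--Mirzakhani lattice-point/volume-growth results (entropy $6g-6$, comparability of metrics on the thick part) to bound the number of admissible mapping classes by a polynomial in $L$. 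You instead map each orbit element to the image system $\phi(\alpha_1\cup\dots\cup\alpha_n)$, whose components are simple closed geodesics of length $O(L)$, count such systems polynomially via the elementary Birman--Series/Dehn--Thurston bound on simple closed geodesics of bounded length, and control the fibres by the finiteness of the setwise stabilizer of a filling curve system (Alexander method; alternatively, proper discontinuity of the $\Map(S)$-action on $\CT(S)$ applied to the minimum set of $X\mapsto\ell_X(\alpha)$ gives the same finiteness). Your route is more elementary and self-contained, avoiding both LRT and ABEM, and it even yields an explicit exponent of order $n(6g-6)$; the paper's route, by working with the Lipschitz metric and orbit growth in Teichm\"uller space, fits more naturally with the machinery used elsewhere in the paper (nets in the thick part of moduli space) and would extend to situations where one bounds $d_{\Lip}(X,\phi^{-1}(X))$ by other means. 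The only small points to keep in mind are that the defining inequality of $\CG(X,\alpha,\epsilon)$ is to be read for nonzero currents $\lambda$ (both sides vanish at $\lambda=0$), and that your ``multicurve'' $\phi(\alpha)$ has mutually intersecting components, neither of which affects the argument.
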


Recall that a {\em full marking} is nothing other than a pants decomposition plus a transversal curve for every component of the pants decomposition. As a current, a full marking is filling.

\begin{proof}
Let $d_{\Lip}$ stand for the Lipschitz metric on Teichm\"uller space $\CT(X)$ of $X$ and recall that there is a constant $K= K(\alpha, X)$ with
$$d_{\text{Lip}}(X,Y)\le K\log(\ell_Y(\alpha))+K$$
for any $Y\in\CT(X)$ \cite[Theorem E]{LRT}. 

Suppose now that we have $\phi\in\Map(X)$ a mapping class and let $\gamma\in\CG(X,\alpha,\epsilon)$. We then have
\begin{align*}
\ell_X(\phi(\gamma))
&=\iota(\lambda_X,\phi(\gamma))=\iota(\lambda_{\phi^{-1}(X)},\gamma)\ge \epsilon\cdot\ell_X(\gamma)\cdot\iota(\lambda{\phi^{-1}(X)},\alpha)\\
&= \epsilon\cdot\ell_X(\gamma)\cdot\iota(\lambda_X,\phi(\alpha))= \epsilon\cdot\ell_X(\gamma)\cdot\ell_{\phi^{-1}(X)}(\alpha).
\end{align*}
In particular we have
$$d_{\Lip}(X,\phi^{-1}(X))\le K\log\left(\frac{\ell_X(\phi(\gamma))}{\epsilon\cdot\ell_X(\gamma)}\right)+K$$
and thus, up to increasing $K$ by some fixed amount (depending only on $\epsilon$ and $X$), we have
$$d_{\Lip}(X,\phi^{-1}(X))\le K\log\left(\ell_X(\phi(\gamma))\right)+K.$$
Altogether, it follows that the set $\left\{\eta\in\Map(X)\cdot\gamma\vert\ell_X(\eta)\le L\right\}$ has at most as many elements as the set
\begin{equation}\label{eq-set}
\left\{\phi\in\Map(X)\vert d_{\Lip}(X,\phi^{-1}(X))\le K\log L+K\right\}.
\end{equation}
Now, it is known that the orbit of a point in Teichm\"uller space under the action of the mapping class group grows exponentially - to see that this is the case note for example that, in the thick part the Lipschitz metric and the Teichm\"uller metric are comparable and the Teichm\"uller metric has volume growth entropy $6g-6$ \cite{ABEM}. It follows that there is a constant $K'$ such that the set \eqref{eq-set} has at most $K'\cdot e^{K'\log L+K'}$ elements. Since $K$ and $K'$ are independent of $L$ and $\gamma\in\CG(X,\alpha,\epsilon)$ we obtain the existence of a constant $C$ which is independent of both $L$ and $\gamma$, so that 
$$\vert \left\{\eta\in\Map(X)\cdot\gamma\vert\ell_X(\eta)\le L\right\}\vert\le C\cdot L^C,$$
as desired.
\end{proof}

We are now ready for the proof of Proposition \ref{sat-lower bound}:

\begin{proof}[Proof of Proposition \ref{sat-lower bound}]
Let $\alpha$ be a full marking, $\epsilon>0$ such that 
$$\vert\{\gamma\in\CG(X,\alpha,\epsilon)\vert\ell_X(\gamma)\le L\}\vert\sim\frac{e^L}L,$$
where $\CG=\CG(X,\alpha,\epsilon)$ is as in the statement of Lemma \ref{lem1}, and $\delta$ positive and arbitrary. Let $\CA=\CA(X,\delta)$ be as in Lemma \ref{lem-lalley2} and recall that by said lemma we have
$$\vert\{\gamma\in\CA(X,\delta)\vert\ell_X(\gamma)\le L\}\vert\sim\frac{e^L}L.$$
The desired lower bound for $\CO(K,S)$ will arise from counting mapping class group orbits of elements represented by curves in $\CG\cap\CA$. Lemma \ref{lem1} asserts that the mapping class group orbit $\Map(X)\cdot\gamma$ of $\gamma\in\left\{\gamma\in\CG\cap\CA\vert \ell_X(\gamma)\le L\right\}$ meets this set at most $C\cdot L^C$ times. This implies that $\left\{\gamma\in\CG\cap\CA\vert \ell_X(\gamma)\le L\right\}$ meets at least $\frac{e^L}{C\cdot L^{C+1}}$ distinct mapping class group orbits.

Now, setting $\kappa=\iota(\lambda_X^1,\lambda_X^1)=\frac 1{\pi^2|\chi(S)|}$ as in \eqref{eq-defik}, note that for all $\gamma\in\CA(X,\delta)$ we have
$$\sqrt{\frac{\iota(\gamma,\gamma)}{\kappa+\delta}}\le\ell_X(\gamma)\le\sqrt{\frac{\iota(\gamma,\gamma)}{\kappa-\delta}}.$$
It follows that the set
$$\left\{\gamma\in\CG\cap\CA\middle\vert \sqrt{\frac{\iota(\gamma,\gamma)}{\kappa+\delta}}\le L\right\}$$
meets at least $\frac{e^L}{C\cdot L^{C+1}}\ge\frac{e^{\sqrt{\frac{\iota(\gamma,\gamma)}{\kappa+\delta}}}}{C\cdot\sqrt{\frac{\iota(\gamma,\gamma)}{\kappa-\delta}}^{C+1}}$
distinct mapping class group orbits. The proposition follows after some elementary algebra and possibly choosing a new $\delta$.
\end{proof}

\section{Finding a suitable metric}\label{sec-metric}

In this section we prove Theorem \ref{good metric}. We recall the statement for the convenience of the reader:

\begin{named}{Theorem \ref{good metric}}  
Let $S$ be a surface of finite topological type and with $\chi(S)<0$. For every closed curve $\gamma$ there is a hyperbolic metric $\rho$ on $S$ with respect to which the geodesic homotopic to $\gamma$ has length bounded by
\[ \ell_{\rho}(\gamma) \leq 4 \sqrt{2|\chi(S)| \cdot \iota(\gamma,\gamma)}.\]
\end{named}

\begin{proof}
Place $\gamma$ in general position (and thus no triple points) and without bigons and consider its image as a graph $\Gamma$ on the topological surface $S$. Add (possibly ideal) edges to $\Gamma$ to obtain a triangulation $\hat\Gamma$. Now, $\hat\Gamma$ determines a topological circle packing of $S$; that is, circles are topological circles, each vertex corresponds to a circle and two circles are adjacent (meaning that they touch in a point) if and only if they are joined by an edge.

By Koebe's Discrete Uniformization theorem \cite[Theorem 4.3]{Stephenson}, there is thus a hyperbolic structure $\rho$ on $S$ with respect to which $\hat\Gamma$ is the dual graph of an actual circle packing. More concretely, we realize $\hat\Gamma$ in such a way that each vertex $v\in V=V(\hat\Gamma)$ goes to the center of the corresponding circle $C_v$ and let $r_v$ be its radius. Now note that
$$\ell_\rho(\gamma)\le\ell_\rho(\Gamma)\le 4\sum_{v\in V}r_v,$$
where the $4$ comes from the fact that $\gamma$ goes through each vertex twice.

Now, to estimate $\sum_v r_v$ consider the functions
$$\CR:V(\hat\Gamma)\to\BR,\ \ \CR(v)=r_v\text{ for all }v$$
$$\BONE:V(\hat\Gamma)\to\BR,\ \ \BONE(v)=1\text{ for all }v$$ 
as elements in $L^2=L^2(V(\hat\Gamma))$. Then we get from the Cauchy-Schwartz inequality that
$$\sum_{v\in V}r_v=\langle R,\BONE\rangle_{L^2}\le\Vert R\Vert_{L^2}\Vert\BONE\Vert_{L^2}$$
Now, note that 
$$\Vert\BONE\Vert_{L^2}=\sqrt{\langle\BONE,\BONE\rangle_{L^2}}=\sqrt{\sum_{v\in V}1}=\sqrt{\Vert V\vert}=\sqrt{\iota(\gamma,\gamma)}$$
On the other hand, for each circle $C_v$ we have
$$\area(C_v)=2\pi(\cosh(r_v)-1)\ge\pi r_v^2$$
Since all the disks are disjoint, and since $X$ has area $2\pi\vert\chi(X)\vert$ we then get that
$$2\pi\vert\chi(X)\vert=\vol(X)\ge\sum_{v\in V}\pi r_v^2=\pi\cdot\Vert R\Vert_{L^2}^2$$
Altogether we get that
$$\ell_\rho(\gamma)\le 4\sum_{v\in V}r_v\le 4 \Vert R\Vert_{L^2}\Vert\BONE\Vert_{L^2}\le 4\sqrt{2|\chi(S)|}\sqrt{\iota(\gamma,\gamma)},$$
as we needed to prove.
 \end{proof}

Recall now that a simple closed geodesic $\alpha$ in a hyperbolic surface $X$ has a collar of width at least 
$$\arcsinh\left(\frac 1{\sinh\left(\frac 12\ell_X(\alpha)\right)}\right)\ge \log\left(\frac 1{\ell_X(\alpha)}\right).$$
This implies that any filling curve $X$ has length at least $\log(\ell_X(\alpha)^{-1})$. In particular, if the curve $\gamma$ in Theorem \ref{good metric} is filling we see that the produced hyperbolic surface satisfies the following bound on injectivity radius:

\begin{kor}\label{kor-inj-bound}
With notation as in Theorem \ref{good metric}, suppose that $\gamma$ is filling. Then we have that
$$\ell_\rho(\alpha)\ge e^{-(4\sqrt{2|\chi(S)|\cdot\iota(\gamma,\gamma)})}$$
for every closed geodesic $\alpha$ in $(S,\rho)$.\qed
\end{kor}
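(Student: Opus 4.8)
The plan is to deduce the bound from the collar inequality displayed just before the statement, applied not to an arbitrary $\alpha$ but to a systole of $(S,\rho)$. Let $s=\syst(S,\rho)$ denote the length of a shortest closed geodesic. A shortest closed geodesic is automatically simple — a transverse self-intersection would allow one to split off a strictly shorter essential closed curve, hence a strictly shorter closed geodesic — so fix a simple closed geodesic $\alpha_0$ with $\ell_\rho(\alpha_0)=s$.

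Next I would run exactly the argument sketched in the paragraph preceding the corollary, now for $\alpha_0$. By the collar lemma the $w(s)$-neighbourhood of $\alpha_0$ is an embedded annulus, with $w(s)=\arcsinh\!\big(1/\sinh(s/2)\big)\ge\log(1/s)$. Since $\gamma$ fills $S$ it is not contained in this annulus, and $\iota(\gamma,\alpha_0)>0$, so $\gamma$ meets the core $\alpha_0$ of the collar; following $\gamma$ from a point of $\alpha_0$ until it first exits the collar yields an arc of length at least $w(s)$. Hence $\ell_\rho(\gamma)\ge w(s)\ge\log(1/s)$, i.e. $s\ge e^{-\ell_\rho(\gamma)}$. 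Combining this with the bound $\ell_\rho(\gamma)\le 4\sqrt{2|\chi(S)|\cdot\iota(\gamma,\gamma)}$ from Theorem \ref{good metric} gives $s\ge e^{-4\sqrt{2|\chi(S)|\cdot\iota(\gamma,\gamma)}}$, and since $\ell_\rho(\alpha)\ge s$ for every closed geodesic $\alpha$ in $(S,\rho)$, the corollary follows for all $\alpha$ simultaneously.

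I do not anticipate a real obstacle; the statement is close to a direct corollary of Theorem \ref{good metric} together with the collar lemma. The one point that needs a moment's thought is that the collar lemma is available only for simple closed geodesics, so one should not argue directly about a possibly non-simple $\alpha$ — reducing to the (necessarily simple) systole, as above, sidesteps this. Alternatively one could note that any non-simple closed geodesic has length greater than a universal constant, which already exceeds the right-hand side $e^{-4\sqrt{2|\chi(S)|\cdot\iota(\gamma,\gamma)}}\le 1$, but the systole reduction is cleaner and self-contained.
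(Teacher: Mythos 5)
Your argument is correct and is essentially the paper's own: the corollary is deduced from the collar estimate stated immediately before it, via the observation that a filling curve must cross the collar of a short simple closed geodesic, combined with the length bound of Theorem \ref{good metric}. Your explicit reduction to the systole (which is simple) just makes precise the step the paper leaves implicit in extending the bound from simple closed geodesics to all closed geodesics.
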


For the sake of completeness we comment briefly on the case that $\gamma$ is not filling. In that case, one can use an argument taken from \cite{AGPS} to modify the construction above, obtaining a new hyperbolic metric $\rho'$ which still satisfies Corollary \ref{kor-inj-bound}, and such that for $\iota(\gamma, \gamma)$ sufficiently large (above some universal constant not depending on $S$), 
$$ \ell_{\rho'}(\gamma) \leq 4 \sqrt{2 |\chi(S)| \cdot \iota(\gamma, \gamma)} +1 . $$
In fact, for any $\epsilon >0$ there is $R(\epsilon)$ so that for $\iota(\gamma, \gamma)>R$, one has 
$$  \ell_{\rho'}(\gamma) \leq 4 \sqrt{2 |\chi(S)| \cdot \iota(\gamma, \gamma)} +\epsilon . $$

We sketch this as follows. Let $Y \subsetneq S$ denote the subsurface of $S$ filled by $\gamma$. Then note that the argument used above in the proof of Theorem \ref{good metric} applies directly to the surface $Y$, and produces a metric $\rho_{Y}$ on $Y$ assigning length $0$ to each boundary component of $Y$. Since $\ell(\gamma) \leq 4 \sqrt{2 |\chi(S)|\cdot\iota(\gamma,\gamma)}$, the geodesic representative for $\gamma$  can not penetrate more than $2 \sqrt{2| \chi(S)|\cdot\iota(\gamma,\gamma)}$ into any of the standard cusp neighborhoods. We exploit this by replacing each cusp with a geodesic boundary component of length $\sim e^{-\sqrt{\iota(\gamma,\gamma)}}$; by a standard geometric convergence argument, this can be done while barely changing the metric on the portion of $Y$ in which $\gamma$ resides. This produces a hyperbolic surface with totally geodesic boundary on which the length of $\gamma$ still satisfies the desired upper bound. Now we simply glue a sufficiently thick copy of the complementary subsurface to $Y$ over its boundary components to complete the construction.

\section{Bounding the size of nets}\label{sec-net}

In addition to the existence of the metric provided by Theorem \ref{good metric}, the proof of Theorem \ref{main} will rely on having some control on the size of an approximating net in moduli space. Let $S$ be a surface of finite topological type and $\chi(S)<0$, and consider both Teichm\"uller space $\CT(S)$ and moduli space $\CM(S)$ to be endowed with the {\em symmetric Lipschitz metric} 
$$d_{\symlip}(X,Y)=\max\{d_{\Lip}(X,Y),d_{\Lip}(Y,X)\},$$
and we are going to be interested in the number of points that we need to approximate the thick part of moduli space with respect to this metric. More concretely, given $\epsilon$ and $\delta$ positive let 
\begin{equation}\label{eq-net}
n_S(\epsilon,\delta)=\begin{cases}
\text{minimal cardinality of a }\delta\text{-dense}\\
\text{set in }(\CM^{\ge\epsilon}(S),d_{\symlip}),
\end{cases}
\end{equation}
where 
$$\CM^{\ge\epsilon}(S)=\{X\in\CM(S)\vert\syst(X)\ge\epsilon\}$$
is the set of hyperbolic structures of $S$ without geodesics shorter than $\epsilon$. We prove:

\begin{prop}\label{prop-net}
There is $C,N>0$ and a function $f:\BR_+\to\BR_+$ all depending only on $S$, so that  
$$n_S(\epsilon,\delta)\le C\cdot\left\vert\log(\epsilon)\right\vert^N\cdot f(\delta)$$
for all $\epsilon,\delta$.
\end{prop}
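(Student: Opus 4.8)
The plan is to estimate $n_S(\epsilon,\delta)$ by covering the thick part $\CM^{\geq\epsilon}(S)$ with balls whose number is controlled by a volume/entropy argument in the Lipschitz metric, and to understand how the radius of that covering set grows as $\epsilon\to 0$. First I would recall the structural fact that, away from the thin part, the symmetric Lipschitz metric $d_{\symlip}$ is bi-Lipschitz equivalent to the Teichm\"uller metric: on $\CM^{\geq\epsilon_0}(S)$ for a fixed $\epsilon_0$ (say $\epsilon_0 = 2\sinh^{-1}(1)$, the Margulis-type constant), there are constants depending only on $S$ comparing the two. Then I would use the fact that $\CM^{\geq\epsilon_0}(S)$ has finite diameter $D_0 = D_0(S)$ in the Teichm\"uller metric and finite volume, and that the mapping-class-group orbit of a point has volume-growth entropy $6g-6 = 3|\chi(S)|$ (this is precisely the $\dim \CT(S)$ appearing via \cite{ABEM} quoted already in the proof of Lemma~\ref{lem2}); a standard packing argument then gives a $\delta$-dense set in $\CM^{\geq\epsilon_0}(S)$ of cardinality at most $g(\delta)$ for a function depending only on $S$ and $\delta$ (not on $\epsilon$).

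The content of the proposition is the dependence on $\epsilon$, i.e.\ understanding $\CM^{\geq\epsilon}(S)\setminus\CM^{\geq\epsilon_0}(S)$, the "moderately thin" region. Here I would use the thick--thin decomposition: a hyperbolic surface $X\in\CM^{\geq\epsilon}(S)$ has a collection of disjoint short geodesics $\alpha_1,\dots,\alpha_k$ of length in $[\epsilon,\epsilon_0)$ (with $k\leq 3g-3$), cutting along which yields a piece in the thick part. The key quantitative input is that, by the work of Choi--Rafi--Series and the Lipschitz-metric formulas (or \cite[Theorem~E]{LRT}, already cited), moving a short geodesic from length $\epsilon$ up to length $\epsilon_0$ costs Lipschitz distance comparable to $\log(\epsilon_0/\epsilon)\approx|\log\epsilon|$: precisely, the Lipschitz distance from $X$ to a point $X'$ where all the $\alpha_i$ have been lengthened to $\epsilon_0$ (keeping twist parameters fixed) is $O(|\log\epsilon|)$. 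Hence $\CM^{\geq\epsilon}(S)$ has $d_{\symlip}$-diameter $O(|\log\epsilon|)$ over the fixed thick part. Applying the same entropy/packing estimate to a ball of radius $O(|\log\epsilon|)$ instead of the fixed $D_0$, the cardinality of a $\delta$-dense set grows like $e^{(6g-6)\cdot O(|\log\epsilon|)}\cdot(\text{function of }\delta)$, which is a polynomial $C\cdot|\log\epsilon|^{N}\cdot f(\delta)$ in $|\log\epsilon|$ after absorbing the entropy constant into the exponent $N$.

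So the key steps, in order, are: (1) bi-Lipschitz comparison of $d_{\symlip}$ and $d_{\teich}$ on a fixed thick part, plus finiteness of its diameter and volume; (2) a general packing lemma: in a metric space where a fixed-diameter region is coverable by $P$ balls of radius $\delta/2$, a radius-$R$ region is coverable by roughly $P\cdot e^{hR}$ such balls, where $h$ is the volume-growth entropy of the $\Map(S)$-orbit; (3) the thick--thin estimate showing $\diam_{d_{\symlip}}(\CM^{\geq\epsilon}(S))\leq \diam_{d_{\symlip}}(\CM^{\geq\epsilon_0}(S)) + O(|\log\epsilon|)$, via the Lipschitz-metric bound on lengthening short curves; (4) assembling (1)--(3) to conclude. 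I expect step (3) — getting the honest $O(|\log\epsilon|)$ bound on the Lipschitz distance needed to push all the short curves up to length $\epsilon_0$ simultaneously, with the implied constant depending only on $S$ — to be the main obstacle, since one must handle several short curves at once and control the behavior across the thin parts; the cleanest route is probably to invoke the explicit Lipschitz-metric estimates of Choi--Rafi--Series (or Rafi's combinatorial formula together with \cite[Theorem~E]{LRT}) rather than prove it from scratch.
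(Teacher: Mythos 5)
There is a genuine gap, and it sits at the very last step of your outline. From steps (2)--(3) you get a region of $d_{\symlip}$-diameter $R=O(|\log\epsilon|)$ and then invoke volume-growth entropy $h=6g-6$ to cover it by roughly $e^{hR}\cdot(\text{function of }\delta)$ balls; but $e^{h\cdot c|\log\epsilon|}=\epsilon^{-hc}$ is a \emph{polynomial in $1/\epsilon$}, not a polynomial in $|\log\epsilon|$, and no choice of $N$ lets you ``absorb the entropy constant'' to turn one into the other. So your argument only yields $n_S(\epsilon,\delta)\le C\,\epsilon^{-N}f(\delta)$, which is exponentially weaker than the stated bound $C\,|\log\epsilon|^N f(\delta)$. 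This weakening matters for the application: in the proof of Theorem \ref{main} one takes $\epsilon=e^{-4\sqrt{2|\chi(S)|K}}$, so an $\epsilon^{-N}$ net bound would inflate the coefficient of $\sqrt K$ in the exponent of the upper bound (this is exactly the phenomenon the paper warns about in its final section when discussing the McMullen-metric bound of order $(1/\epsilon)^{\dim\CT(S)}$), whereas the polylogarithmic bound is what keeps the exponent at $4\sqrt{2|\chi(S)|}+\delta$.

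The underlying issue is that a diameter-plus-entropy packing argument cannot see that the thin directions of moduli space are metrically very ``small'' in $d_{\symlip}$: to cover the region where some curves have length between $\epsilon$ and a fixed constant, one needs only about $|\log\epsilon|$ choices per pinching curve, because the symmetric Lipschitz distance in those directions is governed by \emph{differences of logarithms} of the short lengths (and the twist directions contribute boundedly). The paper's proof exploits exactly this: it covers $\CM^{\ge\epsilon}(S)$ by finitely many Bers--Fenchel--Nielsen boxes $\CQ^{P_i}_\epsilon$, proves (Lemma \ref{lem-finite collection}, via geometric convergence on the thick part and explicit model maps \eqref{eq-map model} on the thin cylinders) that if the log-lengths of a fixed finite curve system $\Gamma$ agree to within $\delta_0$ then $d_{\symlip}\le\delta$, and then runs a Euclidean packing argument for the log-length image, which by \eqref{eq-length-boundFN} lies in a cube of side $O(|\log\epsilon|)$; this gives $\bigl(2c|\log\epsilon|+2c\bigr)^{|\Gamma|}\delta_0^{-|\Gamma|}$ points, i.e.\ the desired polylog bound with $N=|\Gamma|$. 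To repair your approach you would need a quantitative substitute for this lemma---for instance the shortest-marking/log-length argument via \cite[Theorem E]{LRT} sketched in the paper's Section 7---rather than the coarse diameter and orbit-growth estimates, which are simply too lossy in $\epsilon$.
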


Before launching into the proof of Proposition \ref{prop-net}, we require some notation. Given a pants decomposition $P$ of $S$ let
$$\Phi^P:\BR_+^{3g-3}\times\BR^{3g-3}\to\CT(S)$$
be the corresponding Fenchel-Nielsen coordinates of Teichm\"uller space; we will be following the conventions in \cite{Buser}. Consider the subsets
$$\CQ^P=(0,26(g-1))^{3g-3}\times[0,1]^{3g-3}$$
$$\CQ^P_\epsilon=(\epsilon,26(g-1))^{3g-3}\times[0,1]^{3g-3}$$
of $\BR_+^{3g-3}\times\BR^{3g-3}$ and, abusing notation, identify $\CQ^P=\Phi^P(\CQ^P)$ and accordingly identify $\CQ^P_\epsilon=\Phi^P(\CQ^P_\epsilon)$. Note also that when varying $X\in\CQ^P$, the length $\ell_X(\gamma)$ of a curve $\gamma$ significantly increases only if the curves in $P$ are becoming very short. More concretely, we get that for every curve $\gamma\subset S$ there is some $c^P_\gamma$ with
\begin{equation}\label{eq-length-boundFN}
\ell_X(\gamma)\le c^P_\gamma\cdot\vert\log\epsilon\vert+c_\gamma^P
\end{equation}
for every $X\in\CQ^P_\epsilon$.

The proof of Proposition \ref{prop-net} will rely on the following estimate for the symmetric Lipschitz distance between points in $\CQ^P$.

\begin{lem}\label{lem-finite collection}
There is a finite collection $\Gamma$ of simple closed curves in $S$ such that for all $\delta>0$ and $X,Y \in \CQ^P$ there is $\delta_0$ so that
$$\max_{\gamma\in\Gamma}\vert\log(\ell_X(\gamma))-\log(\ell_Y(\gamma))\vert\le\delta_0  \Rightarrow d_{\symlip}(X,Y)\le\delta.$$
\end{lem}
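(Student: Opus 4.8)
The plan is to combine Thurston's description of the Lipschitz metric with a compactness argument. Recall first that, by Thurston's theorem on minimal stretch maps, for $X,Y\in\CT(S)$ one has
$$d_{\symlip}(X,Y)=\sup_{\gamma}\bigl|\log\ell_X(\gamma)-\log\ell_Y(\gamma)\bigr|,$$
the supremum being over all isotopy classes of essential simple closed curves $\gamma$ on $S$; indeed $d_{\Lip}(X,Y)=\sup_\gamma\log\bigl(\ell_Y(\gamma)/\ell_X(\gamma)\bigr)\ge 0$ and symmetrically. So the content of the lemma is that, uniformly over $X,Y\in\CQ^P$, finitely many of these comparisons already control the whole supremum.

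First I would choose $\Gamma$. For each subset $\sigma\subseteq\{1,\dots,3g-3\}$ let $S_\sigma$ be the surface obtained from $S$ by deleting the curves $\{P_i:i\in\sigma\}$, and fix a finite collection $\Gamma_\sigma$ of essential simple closed curves of $S_\sigma$ whose lengths give a proper topological embedding of the Teichm\"uller space of $S_\sigma$ into $\BR^{\Gamma_\sigma}$ (such ``rigidity collections'', of $9g-9$ type on each component, exist classically). Viewing each $\Gamma_\sigma$ as a collection of curves on $S$ disjoint from $\{P_i:i\in\sigma\}$, put $\Gamma=P\cup\bigcup_\sigma\Gamma_\sigma$, a finite collection of simple closed curves; note $\Gamma_\emptyset$ is then a rigidity collection for $\CT(S)$ itself. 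Now suppose, towards a contradiction, that the conclusion fails for this $\Gamma$ with the quantifier on $\delta_0$ read uniformly (which is what is used in the proof of Proposition \ref{prop-net}): there are $\delta>0$ and sequences $X_n,Y_n\in\CQ^P$ with $\max_{\gamma\in\Gamma}|\log\ell_{X_n}(\gamma)-\log\ell_{Y_n}(\gamma)|\to0$ but $d_{\symlip}(X_n,Y_n)>\delta$. Passing to a subsequence I would arrange that all Fenchel--Nielsen coordinates of the $X_n$ and of the $Y_n$ converge (length coordinates in $[0,26(g-1)]$, twist coordinates in $[0,1]$) and that $\ell_{X_n}(\gamma),\ell_{Y_n}(\gamma)$ converge in $[0,\infty]$ for each $\gamma\in\Gamma$. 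Set $\sigma=\{i:\ell_{X_n}(P_i)\to0\}$. Since $P\subseteq\Gamma$ the hypothesis gives $\ell_{X_n}(P_i)/\ell_{Y_n}(P_i)\to1$ for every $i$, so the same $\sigma$ is produced by $Y_n$, and on the complement of $\sigma$ the pants lengths stay bounded away from $0$. Hence the thick pieces of $X_n$ and of $Y_n$ — the subsurfaces complementary to the short curves $\{P_i:i\in\sigma\}$ — converge geometrically to hyperbolic structures $X_\infty,Y_\infty$ on $S_\sigma$; for $\gamma\in\Gamma_\sigma$ the limits $\ell_{X_n}(\gamma)\to\ell_{X_\infty}(\gamma)$ and $\ell_{Y_n}(\gamma)\to\ell_{Y_\infty}(\gamma)$ are finite and, by hypothesis, equal, whence $X_\infty=Y_\infty$ by the rigidity built into $\Gamma_\sigma$.

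It remains to derive the contradiction $d_{\symlip}(X_n,Y_n)\to0$, and this is where the real work lies. When $\sigma=\emptyset$ there is no pinching: $X_n\to X_\infty$ and $Y_n\to Y_\infty=X_\infty$ in $\CT(S)$, and continuity of the metric gives $d_{\symlip}(X_n,Y_n)\to0$. In general the difficulty is that $\ell_{X_n}(\gamma)$ and $\ell_{Y_n}(\gamma)$ blow up for curves $\gamma$ crossing the pinched curves, so $d_{\symlip}$ is not continuous on any fixed compactification; one must check that the two remaining discrepancies contribute $o(1)$ to $d_{\symlip}(X_n,Y_n)$, uniformly in $\gamma$. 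The discrepancy in the pinched pants lengths is harmless since their ratio tends to $1$. The discrepancy in the twist parameters around the $P_i$, $i\in\sigma$, is bounded (both twists lie in $[0,1]$), and here the key estimate is that a curve $\gamma$ crossing the collar of a geodesic of length $\epsilon$ picks up length at least $\sim 2\,\iota(\gamma,P_i)\log(1/\epsilon)$, which dominates the change in $\ell_X(\gamma)$ coming from a twist of bounded size — by Wolpert's twist derivative formula the latter is at most $\iota(\gamma,P_i)$ times the twist discrepancy. Thus the collar contributions to $\ell_{X_n}(\gamma)$ and $\ell_{Y_n}(\gamma)$ agree to leading order, and, combined with the geometric convergence of the thick pieces (which controls the bounded remaining portions of $\ell(\gamma)$), one obtains $\log\ell_{X_n}(\gamma)-\log\ell_{Y_n}(\gamma)\to0$ uniformly over all simple $\gamma$, hence $d_{\symlip}(X_n,Y_n)\to0$. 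I expect making this last step precise — in particular the uniformity in $\gamma$ of the comparison of the ``thick portions'' of $\ell(\gamma)$ — to be the main obstacle; it may be cleanest to bypass it by comparing both $X_n$ and $Y_n$ to an auxiliary reference surface built from $X_\infty$ together with the minimum of the two pinched lengths (and zero twist), or by invoking a known continuity statement for the Lipschitz metric under Fenchel--Nielsen convergence with bounded twisting. Everything else in the argument is soft.
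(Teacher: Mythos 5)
Your choice of $\Gamma$ (the pants curves together with rigidity collections for $S$ and for every surface obtained by deleting a subset of the pants curves), the reduction to sequences $X_n,Y_n\in\CQ^P$ with length ratios tending to $1$ along $\Gamma$, the identification of the pinched set via $P\subset\Gamma$, and the geometric-limit-plus-rigidity argument giving $X_\infty=Y_\infty$ are all exactly the paper's argument. The difference, and the problem, is the last step. You propose to conclude $d_{\symlip}(X_n,Y_n)\to 0$ from Thurston's formula by showing $|\log\ell_{X_n}(\gamma)-\log\ell_{Y_n}(\gamma)|\to 0$ uniformly over \emph{all} simple closed curves $\gamma$, arguing that the collar contributions ($\sim 2\iota(\gamma,P_i)\log(1/\epsilon)$) dominate the twist discrepancy and that geometric convergence controls the thick portions. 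As you yourself flag, this is the crux and it is not carried out: for curves that cross the collars only a few times but are mostly carried by the thick part, the collar term does \emph{not} dominate their length, so ``agreement to leading order in the collars'' gives no control of the ratio; what is needed there is multiplicative $(1+o(1))$ control of the thick portion, uniformly in $\gamma$, and then a way to stitch the additive thick and collar estimates into a single multiplicative bound. Moreover the geodesic representatives on $X_n$ and $Y_n$ do not decompose into matching ``thick'' and ``collar'' pieces in any canonical way, so per-curve bookkeeping requires comparing via a map in any case. In other words, making your step precise essentially amounts to producing a global map $X_n\to Y_n$ of Lipschitz constant $1+o(1)$ --- which is what the paper does directly, bypassing Thurston's formula altogether.

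Concretely, the paper takes the near-isometries of the $\eta$-thick parts coming from geometric convergence (adjusted to be isometric on the boundary) and extends them over each thin cylinder by the model twist maps $f_\alpha$ of \eqref{eq-map model}; the Lipschitz bound \eqref{eq-lip model} is independent of the twist $\alpha$ and tends to $1$ precisely when the two core lengths are small with $|\log\epsilon-\log\epsilon'|$ small, which is guaranteed because $P\subset\Gamma$. Gluing gives a map $X_n\to Y_n$ with Lipschitz constant $\to 1$, hence $d_{\Lip}(X_n,Y_n)\to 0$, and symmetry gives the statement for $d_{\symlip}$. So your outline is recoverable, but the missing ingredient is exactly this explicit extension over the collars (or some equivalent uniform comparison); invoking ``a known continuity statement for the Lipschitz metric under Fenchel--Nielsen convergence with bounded twisting'' is not something you can cite off the shelf at the degenerating boundary of $\CQ^P$, so as written the proof is incomplete at its decisive point.
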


Before we address the proof of Lemma \ref{lem-finite collection} we make an observation which will come in handy over the course of the argument. Fixing some $\eta$ positive, let $\epsilon \ll \eta$ also positive, and let $C(\epsilon)$ be the hyperbolic cylinder with soul of length $\epsilon$ and whose boundary components both have constant curvature and length $\eta$. We parameterise $C(\epsilon)=\BS^1\times[0,1]$ in such a way that for each $\theta\in\BS^1$ the segment $t\to (\theta,t)$ is a parametrised minimal length geodesic segment between both boundary components and, (2) for each $t$ the circle $\theta\to(\theta,t)$ has constant curvature. 

Suppose that we are now given two such hyperbolic cylinders $C(\epsilon)=\BS^1\times[0,1]$ and $C(\epsilon')=\BS^1\times[0,1]$, and that for some $\alpha\in\BS^1$ we consider the map
\begin{equation}\label{eq-map model}
f_\alpha:C(\epsilon)\to C(\epsilon'), \ \ f_\alpha(\theta,t)\mapsto(\theta+t\cdot\alpha,t).
\end{equation}
A simple, but not very elegant computation yields the following bound for the Lipschitz constant of $f_\alpha$:
\begin{equation}\label{eq-lip model}
\Lip(f_\alpha)\le(1+\epsilon')\max\left\{\frac{\epsilon'}\epsilon,\frac{\log\epsilon'}{\log\epsilon}\right\}
\end{equation}
whenever $\epsilon,\epsilon'$ are smaller than some universal constant $\epsilon_0$ - note that the bound does not depend on the twist $\alpha\in\BS^1$. Note also that, this bound can be made very close to $1$ if (1) $\epsilon$ and $\epsilon'$ are small and (2) the absolute value $\vert\log(\epsilon)-\log(\epsilon')\vert$ of the difference of the logarithms is also small. We now prove Lemma \ref{lem-finite collection}:

\begin{proof}[Proof of Lemma \ref{lem-finite collection}]
Recall that for every finite type surface $\Sigma$ there is a finite collection of curves $\Gamma_\Sigma$ such that the map
$$\CT(\Sigma)\to\BR^{\Gamma_\Sigma},\ \ X\mapsto (\log(\ell_X(\gamma)))_{\gamma\in\Gamma_\Sigma}$$
is injective \cite{Buser}. We let $\Gamma$ be a collection of curves which contains $P$ and the collection $\Gamma_{S\setminus P'}$ for every subset $P'\subset P$. We claim that this collection $\Gamma$ satisfies the claim.

Note now that it suffices to prove that whenever we are given sequences $(X_i),(Y_i)$ in $\CQ=\CQ^P$ with 
\begin{equation}\label{eq-blabla}
\vert\log(\ell_{X_i}(\gamma))-\log(\ell_{Y_i}(\gamma))\vert\to 0
\end{equation}
for all $\gamma\in\Gamma$, then we have $d_{\symlip}(X_i,Y_i)\to 0$. In the context of \eqref{eq-blabla}, assume first that the sequences $(X_i)$ and $(Y_i)$ converge to some $X,Y\in\CQ$. Then we have that $\ell_X(\gamma)=\ell_Y(\gamma)$ for all $\gamma\in\Gamma$ and thus that $X=Y$ because $\Gamma_S\subset\Gamma$. This means that $d_{\symlip}(X_i,Y_i)\to 0$, as desired.

Otherwise, suppose that the sequence $(X_i)$ diverges in $\CQ$, and note that this is only possible if some of the curves in $P$ are being pinched. Passing to a subsequence we can assume that there is thus a subcollection $P'\subset P$ such that $\ell_{X_i}(\gamma)\to 0$ if and only if $\gamma\in P'$. Note then that, since $P\subset\Gamma$, we also get from \eqref{eq-blabla} that $\ell_{Y_i}(\gamma)\to 0$ if and only if $\gamma\in P'$.

 It follows that, choosing $\eta$ positive and small enough, after passing again to subsequences, we can assume that the thick parts $X_i^{\ge\eta}$ and $Y_i^{\ge\eta}$ converge geometrically to the thick parts $X_\infty^{\ge\eta}$ and $Y_\infty^{\ge\eta}$ of complete hyperbolic structures $X_\infty$ and $Y_\infty$ on $S\setminus P'$. We get from \eqref{eq-blabla} that $\ell_{X_\infty}(\gamma)=\ell_{Y_\infty}(\gamma)$ for every $\gamma\in\Gamma$ contained in $S\setminus P'$. Since $\Gamma_{S\setminus P'}\subset\Gamma$, it follows that $X_\infty=Y_\infty$. 

Since the thick parts of $X_i$ and $Y_i$ converge geometrically to the same limit $X_\infty=Y_\infty$, it follows that there are maps
$$\phi_i:X_i\to Y_i$$
in the correct homotopy class which, when $i$ grows, induce more and more isometric maps $\phi_i\vert_{X_i^{\ge\eta}}:X_i^{\ge\eta}\to Y_i^{\ge\eta}$. Note that we can assume without loss of generality that $\phi_i$ is actually isometric on the boundary of $X_i^{\ge\eta}$. 

Note also that the $\eta$-thin parts $X_i^{\le\eta}$ and $Y_i^{\le\eta}$ are disjoint unions of cylinders like those considered in the remark before the proof, and we can homotope $\phi$ on its restriction to each component of the thin part so that on each such cylinder, it is of the form \eqref{eq-map model}. Moreover, since $P\subset\Gamma$ we obtain from \eqref{eq-blabla} and \eqref{eq-lip model} that the Lipschitz constant of the induced map between thin parts is arbitrarily close to $1$. Altogether we have that the map $\phi_i:X_i\to Y_i$ is homotopic to a map with Lipschitz constant $L_i\to 1$. Thus, $d_{\Lip}(X_i,Y_i)\to 0$. Since the whole argument is symmetric, we deduce that the same is true if we reverse the roles of $X_i$ and $Y_i$. This yields that $d_{\symlip}(X_i,Y_i)\to 0$ concluding the proof of Lemma \ref{lem-finite collection}.
\end{proof}

Armed with Lemma \ref{lem-finite collection}, we can conclude the proof of Proposition \ref{prop-net}:

\begin{proof}[Proof of Proposition \ref{prop-net}]
Recall that it is a theorem of Bers (see \cite{Buser}) that every surface in Teichm\"uller space admits a pants decomposition whose curves have length at most $26(g-1)$. It follows that if $P_1,\dots,P_s$ are pants decompositions of $S$ such that every pants decomposition is mapping class group equivalent to one of those then we have that
$$\CQ^{P_1}\cup\dots\cup\CQ^{P_s}$$
is a coarse fundamental domain for the action of the mapping class group on Teichm\"uller space. It follows that to find a $\delta$-dense set in $\CM^{\ge\epsilon}$ it suffices to find a $\delta$-dense for each one of the sets $\CQ^{P_i}_\epsilon$. We state what we have to prove in these terms:
\medskip

\noindent{\bf Claim.} {\em Let $P$ a pants decomposition of $S$. There are $C,N,\epsilon_0>0$ and a function $f:\BR_+\to\BR_+$ such that for all positive $\epsilon<\epsilon_0$ and $\delta$ there is a $\delta$-dense set in $(\CQ^P_\epsilon,d_{\symlip})$ with at most $C\cdot\left\vert\log(\epsilon)\right\vert^N\cdot f(\delta)$ elements.}
\medskip

It remains to prove the claim. Since the pants decomposition is now fixed, we drop every reference to it from our notation. Let $\Gamma$ be the collection of curves provided by Lemma \ref{lem-finite collection} and consider the map
$$\lambda:\CT(S)\to\BR^\Gamma,\ \ X\mapsto(\log\ell_X(\gamma))_{\gamma\in\Gamma}.$$
We endow the domain $\CT(S)$ with $d_{\symlip}$ and the image $\BR^\Gamma$ with the supremum norm $\Vert\cdot\Vert_\infty$. Note also that by \eqref{eq-length-boundFN} there is some $c=c(\Gamma)$ such that for every $\epsilon$ we have
\begin{equation}\label{eq-box}
\lambda(\CQ_\epsilon)\subset[c\log(\epsilon)-c,c\vert\log\epsilon\vert+c]^\Gamma.
\end{equation}
Suppose now that we are given $\delta$ and set $f(\delta)=\delta_0$ where the latter is the constant provided by Lemma \ref{lem-finite collection}.

Then, given \eqref{eq-box}, a packing argument in euclidean space implies that $(\lambda(\CQ_\epsilon),\Vert\cdot\Vert_\infty)$ has a $\delta_0$-dense set $\CN$ with at most 
$$(2c\vert\log\epsilon\vert+2c)^{\vert \Gamma\vert}\delta_0^{-\vert\Gamma\vert}$$
elements. Now, lemma \ref{lem-finite collection} shows that the set $\lambda^{-1}(\CN)\subset\CQ_\epsilon$ is $\delta$-dense with respect to $d_{\symlip}$. The claim follows.
\end{proof}

\section{The upper bound }\label{sec-upper}

In this section we give upper bounds for the number of mapping class group orbits of curves with at most $K$ self-intersections and conclude the proof of Theorem \ref{main}. We will however first consider an auxiliary quantity. Given a surface $S$ of finite type with $\chi(S)<0$, and given $\epsilon$ and $L$ positive, let $\CS(S,\epsilon,L)$ be the number of all $\Map(S)$-orbits of filling curves $\gamma\subset S$ with the property that there is $(S,\rho)\in\CM^{\ge\epsilon}(S)$ such that $\ell_\rho(\gamma)\le L$. Using the results of the previous sections we bound $\CS(S,\epsilon,L)$ as follows:

\begin{prop}\label{prop-upper-bound-thick}
For every $S$ and $\delta$ there is $C$ such that for all $\epsilon$ and $L$ we have
$$\CS(S,\epsilon,L)\le C\cdot n_S(\epsilon,\delta)\cdot e^{e^{\delta}L}.$$
where $C$ is a constant which depends only on the topology of the surface and where $n_S(\epsilon,\delta)$ is as in \eqref{eq-net}.
\end{prop}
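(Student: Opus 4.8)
The plan is to combine three ingredients: a bound on the number of short closed geodesics that is uniform over \emph{all} hyperbolic structures on $S$, the mere existence of a net of cardinality $n_S(\epsilon,\delta)$ as in \eqref{eq-net}, and the elementary fact that moving a hyperbolic structure a $d_{\symlip}$-distance at most $\delta$ multiplies every geodesic length by at most $e^{\delta}$. Roughly: each orbit counted by $\CS(S,\epsilon,L)$ can be realised, after displacing the metric to a nearby net point, as a geodesic of length barely more than $e^{\delta}L$ on one of the $n_S(\epsilon,\delta)$ surfaces of the net; counting such geodesics on each net surface gives the estimate.

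\smallskip
\noindent\textbf{Step 1 (uniform geodesic count).} First I would establish that there is $C_0=C_0(S)$ with the following property: for every hyperbolic metric $Y$ on $S$ and every $R\ge 0$, the number of closed geodesics on $Y$ of length at most $R$ is at most $C_0 e^{R}$. Let $\epsilon_0$ be the Margulis constant, so that the $\epsilon_0$-thin part of $Y$ is a disjoint union of standard collars and cusp neighbourhoods. A closed geodesic contained in the thin part lies in a single such component and, since cusp neighbourhoods contain no closed geodesics, must be the core of a collar; there are at most $|\chi(S)|$ of these. If instead a closed geodesic $\alpha$ meets the $\epsilon_0$-thick part $Y^{\ge\epsilon_0}$, choose $x_0\in\alpha\cap Y^{\ge\epsilon_0}$. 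Since $Y^{\ge\epsilon_0}$ has area at most $2\pi|\chi(S)|$ and injectivity radius at least $\epsilon_0/2$ there, a maximal $(\epsilon_0/2)$-separated subset of $Y^{\ge\epsilon_0}$ has at most $N_0=N_0(S)$ points and its $(\epsilon_0/2)$-balls cover $Y^{\ge\epsilon_0}$; so $x_0\in B(z_j,\epsilon_0/2)$ for one of these points $z_j$. Lifting to $\BH^2$, the point $\widetilde{x_0}$ lies on the axis of some $A\in\pi_1(Y)$ with translation length $\ell_Y(\alpha)\le R$, and choosing the lift within $\epsilon_0/2$ of $\widetilde{z_j}$ yields $d(\widetilde{z_j},A\widetilde{z_j})\le R+\epsilon_0$. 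As the $\pi_1(Y)$-orbit of $\widetilde{z_j}$ is $\epsilon_0$-separated (again because $z_j$ is $\epsilon_0$-thick), a volume-packing estimate in $\BH^2$ bounds the number of such $A$ by $C_1 e^{R}$ with $C_1=C_1(S)$; and $\alpha$ is the closed geodesic associated to $A$. Summing over $j$ gives the claim. The essential point is the uniformity in $Y$: this is exactly what makes the constant in the Proposition depend only on the topology of $S$.

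\smallskip
\noindent\textbf{Step 2 (from orbits to geodesics on the net).} Fix $\delta>0$ and put $C=C_0$. Given $\epsilon,L>0$, let $\CC\subset\CM^{\ge\epsilon}(S)$ be a $\delta$-dense set with $|\CC|=n_S(\epsilon,\delta)$. Let $[\gamma]$ be a $\Map(S)$-orbit counted by $\CS(S,\epsilon,L)$; fix a representative $\gamma$ and $\rho\in\CM^{\ge\epsilon}(S)$ with $\ell_\rho(\gamma)\le L$, and fix $X\in\CC$ with $d_{\symlip}(X,\rho)\le\delta$. For any $\eta>0$, the inequality $d_{\Lip}(\rho,X)\le\delta$ provides a map $\rho\to X$ homotopic to the identity with Lipschitz constant $<e^{\delta+\eta}$; pushing forward the $\rho$-geodesic representative of $\gamma$ shows that the free homotopy class of $\gamma$ has, with respect to $X$, geodesic length $<e^{\delta+\eta}L$. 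The orbit $[\gamma]$ is recovered from the pair consisting of $X$ together with this free homotopy class (it is the $\Map(S)$-orbit of that class), so $[\gamma]\mapsto(X,\text{that class})$ is injective on the orbits counted by $\CS(S,\epsilon,L)$. Hence
$$\CS(S,\epsilon,L)\ \le\ \sum_{X\in\CC}\#\{\text{closed geodesics on }X\text{ of length}<e^{\delta+\eta}L\}\ \le\ n_S(\epsilon,\delta)\cdot C_0\, e^{e^{\delta+\eta}L},$$
using Step 1. Since neither $\CS(S,\epsilon,L)$ nor $n_S(\epsilon,\delta)$ depends on $\eta$, letting $\eta\to 0$ gives $\CS(S,\epsilon,L)\le C\cdot n_S(\epsilon,\delta)\cdot e^{e^{\delta}L}$.

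\smallskip
The bulk of the work, and the only genuinely non-formal point, is the uniform geodesic count in Step 1. Everything else is bookkeeping: the slightly awkward exponent $e^{\delta+\eta}$ produced by the definition of $d_{\Lip}$ as an infimum is absorbed by the final limit, and the distinction between Teichm\"uller space and moduli space is handled by phrasing the injectivity in terms of free homotopy classes on the underlying surface. (Note also that the hypothesis that $\gamma$ be filling is not needed for this Proposition.)
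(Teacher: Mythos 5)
Your argument is correct, and its skeleton is the same as the paper's: choose a $\delta$-dense set $\CC\subset(\CM^{\ge\epsilon}(S),d_{\symlip})$ of cardinality $n_S(\epsilon,\delta)$, transport each curve to a nearby net point by an (almost) $e^{\delta}$-Lipschitz map, and bound $\CS(S,\epsilon,L)$ by the number of short free homotopy classes on each of the $n_S(\epsilon,\delta)$ net surfaces, counted by a packing argument anchored at Margulis-thick points. The difference lies in the per-surface counting lemma. The paper exploits the filling hypothesis: a filling geodesic must enter the thick part, whose components have diameter bounded in terms of $|\chi(S)|$ alone, so the curve can be rerouted through one of boundedly many marked thick basepoints at bounded additive cost, and one then counts based loops at such a basepoint. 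You instead prove a bound, uniform over \emph{all} hyperbolic structures $Y$ on $S$, on the number of primitive closed geodesics of length at most $R$, treating separately the geodesics contained in the thin part (the collar cores) and those meeting the thick part (via an $(\epsilon_0/2)$-net of thick basepoints and the $\epsilon_0$-separation of the orbit of a lift of such a basepoint). Your version is slightly longer but more robust: it dispenses with the filling hypothesis and with the diameter estimate for thick components, and it makes visibly explicit that the constant is independent of $\epsilon$, which is the crucial point. One caveat you should state: Step 1 as phrased is only true for \emph{primitive} closed geodesics --- on a surface of systole $s$ the iterates $\alpha^n$ with $n\le R/s$ of a short core all have length at most $R$, so the count of all classes is not uniformly bounded by $C_0e^{R}$; your thin-part analysis (``must be the core of a collar'') implicitly assumes primitivity, and since the curves counted by $\CS(S,\epsilon,L)$ are primitive this does not affect the proof, though if one genuinely drops the filling hypothesis on a punctured surface one must also set aside the finitely many orbits of peripheral classes, which have no geodesic representative.
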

\begin{proof}
Fixing $\epsilon$, let $\CN$ be a $\delta$-dense set in $(\CM^{\ge\epsilon}(S),d_{\symlip})$ with cardinality $n_S(\epsilon,\delta)$, and note that for every point $X\in\CM^{\ge\epsilon}$ there is $Y\in\CN$ and an $e^{\delta}$-lipschitz map $X\to Y$. It follows that each mapping class orbit contributing to $\CS(\epsilon,L)$ is represented by some curve which has length at most $e^{\delta}L$ with respect to some $Y\in\CN$. In other words, we have
$$\CS(S,\epsilon,L)\le\sum_{Y\in\CN}\vert\CS_Y(e^{\delta}L)\vert$$
where $\CS_Y(L)$ is, as in section \ref{sec-lalley}, the set of all curves which have length at most $L$ in $Y$.

Fix now a small number like $\mu=\frac 1{10}$. The $\mu$-thin part of each $Y\in\CN$ has at most $\vert\chi(S)\vert$ connected components. Choose a base point in each one of those $\mu$-thick parts and let $\pi_Y\subset Y$ be the set consisting of those points. The diameter of each component of the $\mu$-thick part of the surface $Y$ is bounded from above by 
$$\diam\le\frac{2\vert\chi(S)\vert}{\mu^2}=200\vert\chi(S)\vert.$$
Since all curves in $\CS(S, \epsilon, L)$ are filling, they enter some thick part. It follows that each curve in $\CS(S,\epsilon,L)$ is represented on at least one surface $Y\in\CN$ by a curve of length $\le e^{2\delta}L+400\vert\chi(S)\vert$ which passes through one of the marked points in the set $\pi_Y$. 

Now, if we are given $Y\in\CN$ and $x\in\pi_Y$ we have the upper bound
$$\le 100\cdot e^{e^{2\delta}L+400\vert\chi(S)\vert}$$
for the cardinality of the set of loops of length at most $e^{2\delta}L+400\vert\chi(S)\vert$ that pass through the $\frac 1{10}$-thick point $x$. Since there are at most $\vert\chi(S)\vert$ choices for $x$ and since there are $n_S(\epsilon,\delta)$ choices for $Y$, we get that
$$\CS(S,\epsilon,L)\le 100\cdot \vert\chi(S)\vert\cdot n_S(\epsilon,\delta)\cdot e^{e^{2\delta}L+400\vert\chi(S)\vert}$$
which is what we wanted to prove.
\end{proof}

Armed with Theorem \ref{good metric}, Proposition \ref{prop-net} and Proposition \ref{prop-upper-bound-thick}, we are ready to prove Theorem \ref{main}:

\begin{proof}[Proof of  Theorem \ref{main}]
The lower bound comes directly from Proposition \ref{sat-lower bound}. We prove now the upper bound. Given a connected essential subsurface $Y\subset S$ let $\CO_{\Fill}(K,Y)$ be the number of $\Map(Y)$-orbits of curves $\gamma\subset Y$ which fill $Y$ and which satisfy $\iota(\gamma,\gamma) \leq K$, and note that, if $Y_1,\dots,Y_r$ are representatives for the finitely many mapping class group orbits of connected essential subsurfaces in $S$ we have that
$$\CO(K,S)=\sum_{i=1}^r\CO_{\Fill}(K,Y_i).$$
In particular, to bound the left side it suffices to give individual bounds for each summand. This is what we will do. In fact, since all cases are identical, and with the aim of simplifying the involved notation, we will limit ourselves to the upper bound for $\CO_{\Fill}(K,S)$.

The starting point is to recall that by Theorem \ref{good metric} and Corollary \ref{kor-inj-bound} we have for every filling curve $\gamma\subset S$ with $\iota(\gamma,\gamma)\le K$ a hyperbolic metric $\rho$ on $S$ with
$$\ell_\rho(\gamma)\le 4\sqrt{2\vert\chi(S)\vert\cdot K}\text{ and }\inj(S,\rho)\ge e^{-4\sqrt{2\vert\chi(S)\vert\cdot K}}.$$
We get thus from Proposition \ref{prop-upper-bound-thick} that for all $\delta>0$ there is $C$ with
$$\CO_{\Fill}(K,S)\le C\cdot n_S\left(e^{-4\sqrt{2|\chi(S)|\cdot K}},\delta\right)\cdot e^{e^{2\delta}4 \sqrt{2|\chi(S)| \cdot K}}$$
Plugging in the bound for $n_S(\cdot,\cdot)$ from Proposition \ref{prop-net}
 we get
\begin{align*}
\CO_{\Fill}(K,S)
& \le C\cdot n_S\left(e^{-4\sqrt{2|\chi(S)|\cdot K}},\delta\right)\cdot e^{e^{2\delta}4 \sqrt{2|\chi(S)| \cdot K}}\\
&\le C'\cdot\left\vert\log\left(e^{-4\sqrt{2|\chi(S)|\cdot K}}\right)\right\vert^N\cdot f(\delta)\cdot e^{e^{2\delta}4 \sqrt{2|\chi(S)| \cdot K}}\\
&= C'' \cdot K^{\frac N2}\cdot e^{e^{2\delta}4 \sqrt{2|\chi(S)| \cdot K}}
\end{align*}
where $C'$ is a constant depending only on the topology of the surface and $C''$ depends on the topology of $S$ and on $\delta$. The claim follows.
\end{proof}

\section{Further Comments}\label{sec-final}

\subsection{Surfaces with punctures} The upper bound immediately applies to any orientable surface $S$ of finite type with $\chi(S)<0$. However, the argument for the lower bound relies on the compactness of the space of projective currents, and the fact that length functions of curves extend continuously to finite-valued functions on the space of currents. When $S$ has cusps, these properties need not hold, although this difficulty can be circumvented by replacing each cusp with a boundary component. For simplicity, we have elected to present proofs for the lower bound only in the setting of closed surfaces, and to simply remark that with care, a similar bound can be obtained for non-closed surfaces as well. In this setting, let $X_\epsilon$ be a convex-cocompact hyperbolic surface whose convex core is homeomorphic to $S$ and has boundary of length $\epsilon$. In the arguments in section \ref{sec-lalley} and section \ref{sec-lower bound}, replace the Liouville current by the Patterson-Sullivan current, that is the current corresponding to the measure of maximal entropy for the recurrent part of the geodesic flow on $X$. Lalley's results still hold and the arguments still apply. 

\subsection{Sharpness} As there is still a gap between the lower and upper bounds, we next address the natural question of sharpness. In particular, we claim that the upper bound is in fact not sharp. Indeed, recall that the upper bound on $\ell_{\rho}(\gamma)$ from Theorem \ref{good metric} is of the form 
$$ \ell_{\rho}(\gamma) \leq 4 \sqrt{\iota(\gamma, \gamma)} \sqrt{\sum_{v} r_{v}^{2}},$$
where $r_{v}$ are the radii of a circle packing by hyperbolic disks on the surface equipped with the metric $\rho$. We obtain the conclusion of Theorem \ref{good metric} by observing that since the hyperbolic area of a disk of radius $r$ is larger than $\pi r^{2}$ and the disks in our circle packing have disjoint interiors, the sum on the right hand side of the above inequality is bounded above by the square root of the area of the entire surface. However, this bound is inefficient in that it includes the area of the surface which is not contained in any of the disks of the circle packing. Thus if one can estimate 

\begin{equation} \label{inefficient}
\text{Area}(S) - \sum_{D \in \CP} \text{Area}(D), 
\end{equation}
where $\CP$ is the set of disks in the packing, one can obtain an improved upper bound on $\CO(K,S)$. We claim that in fact the difference (\ref{inefficient}) can be bounded away from $0$. This follows from the fact that we can extend the filling curve $\gamma$ to a triangulation of $S$ which has degree bounded above by $12$. Thus, the dual circle packing will also have bounded degree. Then a Lemma of Rodin-Sullivan \cite{RodSul} (the ``ring lemma'') can be used to control the shape of each uniformized triangle in the triangulation on the surface equipped with the metric $\rho$, which can, in principle, be used to bound from below the area missed by the packing. 

On the other hand, we conjecture that the lower bound of Theorem \ref{main} is sharp:

\begin{conj}
$$ \lim_{K \rightarrow \infty} \frac{\log(\CO(K,S))}{\sqrt K}=\pi\sqrt{\vert\chi(S)\vert}$$
%
%

\end{conj}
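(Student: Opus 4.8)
The lower bound $\liminf_{K\to\infty}\log\CO(K,S)/\sqrt K\ge\pi\sqrt{|\chi(S)|}$ is precisely Proposition \ref{sat-lower bound}, so what remains is the matching upper bound
$$\limsup_{K\to\infty}\frac{\log\CO(K,S)}{\sqrt K}\le\pi\sqrt{|\chi(S)|}.$$
Inspecting Section \ref{sec-upper}, the exponent produced there is governed entirely by the length bound in Theorem \ref{good metric}: the net cardinality $n_S(\epsilon,\delta)$ contributes only a factor polynomial in $\sqrt K$ by Proposition \ref{prop-net}, and the iterated exponentials $e^{e^{2\delta}L}$ of Proposition \ref{prop-upper-bound-thick} absorb any $\delta>0$. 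Hence the conjectured bound would follow from the following sharpening of Theorem \ref{good metric}: \emph{for every $\delta>0$ there is $K_\delta$ so that every filling curve $\gamma\subset S$ with $\iota(\gamma,\gamma)\ge K_\delta$ admits a hyperbolic metric $\rho$ on $S$ with $\ell_\rho(\gamma)\le(\pi\sqrt{|\chi(S)|}+\delta)\sqrt{\iota(\gamma,\gamma)}$}. The lower bound on $\inj(S,\rho)$ needed to feed this into Proposition \ref{prop-upper-bound-thick} is then automatic as in Corollary \ref{kor-inj-bound}, and for curves filling a proper subsurface $Y$ the same estimate applied on $Y$ gives an exponent no larger, since $|\chi(Y)|\le|\chi(S)|$. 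In the notation of the remark after Theorem \ref{good metric}, one is asking that $\overline{m}_K(S)\le(\pi\sqrt{|\chi(S)|}+o(1))\sqrt K$ as $K\to\infty$, for $S$ fixed.

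I would first try to extract this from a refined circle-packing argument. Two inefficiencies must be removed. First, the estimate $\sum_v\pi r_v^2\le\vol(S,\rho)=2\pi|\chi(S)|$ discards the interstitial area; since $\hat\Gamma$ may be taken of bounded degree, the ring lemma of Rodin--Sullivan \cite{RodSul} controls the shape of the uniformized triangles, so, as already observed in Section \ref{sec-final}, one can improve this to $\sum_v\pi r_v^2\le(1-c)\vol(S,\rho)$ for a definite $c>0$, and ideally replace $2|\chi(S)|$ by (twice) the packing density of the nearly hexagonal packing that arises as the average degree of $\hat\Gamma$ tends to $6$. Second, and more seriously, the factor $4=2\cdot 2$ coming from routing $\gamma$ along the $1$-skeleton of $\Gamma$ through the full diameters $2r_v$ of the vertex circles is wasteful; one would reroute $\gamma$ through the interstices, cutting the corner at each crossing and charging its length to arcs much shorter than $2r_v$. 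The honest expectation --- and this is the main obstacle --- is that even both improvements together fall short: the corner-cutting step still seems to lose a definite multiplicative constant, so the circle-packing method as it stands appears unable to reach $\pi\sqrt{|\chi(S)|}$, and a genuinely different construction of the metric would be needed.

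The natural candidate for such a construction is one that makes $\gamma$ nearly Liouville-typical: given a filling $\gamma$ with $\iota(\gamma,\gamma)=K$, produce a hyperbolic metric $\rho_\gamma$ for which the projectivized current $\frac1{\ell_{\rho_\gamma}(\gamma)}\gamma$ lies within $\epsilon$ of the normalized Liouville current $\lambda_{\rho_\gamma}^1$ in $P\CC$. Continuity and homogeneity of the intersection form would then give $\iota(\gamma,\gamma)\ge(\kappa-\epsilon')\ell_{\rho_\gamma}(\gamma)^2$ with $\kappa=\frac1{\pi^2|\chi(S)|}$ as in \eqref{eq-defik}, hence $\ell_{\rho_\gamma}(\gamma)\le(\pi\sqrt{|\chi(S)|}+o(1))\sqrt K$. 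This reduces the conjecture to a purely geometric question --- can every filling curve be made Liouville-typical with an error uniform in $K$? --- which is a kind of converse to Lalley's theorem and appears to be wide open. It is conceivable that the answer is negative, i.e.\ that there are ``inefficient'' filling curves with $K$ self-intersections whose length remains at least $c\sqrt K$ with $c>\pi\sqrt{|\chi(S)|}$ on \emph{every} hyperbolic metric; establishing the conjecture in that case would instead require showing that such curves account for only $e^{o(\sqrt K)}$ mapping class group orbits. Deciding between these alternatives --- equivalently, pinning down the asymptotics of $\overline{m}_K(S)$ for a fixed surface --- is in our view the heart of the problem.
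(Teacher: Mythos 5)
The statement you are addressing is stated in the paper only as a conjecture: the paper offers no proof of it, just the evidence of Theorem \ref{main} together with the sharpness discussion in Section \ref{sec-final}. Your proposal, as you yourself say, is likewise not a proof but a reduction plus an identification of what is missing, so it should be judged on whether that reduction is correct and whether the missing step is correctly located. On both counts you are accurate. The lower bound $\liminf\log\CO(K,S)/\sqrt K\ge\pi\sqrt{|\chi(S)|}$ is indeed Proposition \ref{sat-lower bound}; and rerunning Sections \ref{sec-net}--\ref{sec-upper} with a hypothetical bound $\ell_\rho(\gamma)\le(\pi\sqrt{|\chi(S)|}+\delta)\sqrt{\iota(\gamma,\gamma)}$ in place of Theorem \ref{good metric} would give the matching upper bound, since $n_S(\epsilon,\delta)$ contributes only a factor polynomial in $\sqrt K$ (Proposition \ref{prop-net}), the $e^{2\delta}$ losses in Proposition \ref{prop-upper-bound-thick} are absorbed by letting $\delta\to0$, the injectivity radius control is as in Corollary \ref{kor-inj-bound}, and the proper-subsurface terms are handled by $|\chi(Y)|\le|\chi(S)|$. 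Your two proposed routes also coincide with the paper's own commentary: the circle-packing refinement via the Rodin--Sullivan ring lemma is exactly the inefficiency the authors point out in Section \ref{sec-final}, and the ``make $\gamma$ Liouville-typical'' heuristic is the reason the constant $\pi\sqrt{|\chi(S)|}$ is conjectured in the first place.

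The genuine gap is the one you flag, and it is not closed anywhere in the paper or in your proposal: one would need either (i) the sharpened length bound $\overline{m}_K(S)\le(\pi\sqrt{|\chi(S)|}+o(1))\sqrt K$ for filling curves, or (ii) a counting statement showing that curves admitting no such efficient metric account for only $e^{o(\sqrt K)}$ mapping class group orbits; both are open, and the circle-packing method as it stands loses definite multiplicative constants (the factor $4$ from routing through circle diameters and the interstitial area) that nobody knows how to remove down to the conjectured constant. One further caution about your Liouville-typical reduction: since the metric $\rho_\gamma$ varies with $\gamma$, the passage from ``$\frac1{\ell_{\rho_\gamma}(\gamma)}\gamma$ is $\epsilon$-close to $\lambda^1_{\rho_\gamma}$'' to ``$\iota(\gamma,\gamma)\ge(\kappa-\epsilon')\ell_{\rho_\gamma}(\gamma)^2$'' needs a modulus of continuity for $\iota$ that is uniform over the relevant (possibly degenerating) family of Liouville currents, so even the reduction in that branch requires an argument, not just continuity at a single point. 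In short: your write-up is a correct and well-calibrated framing of the open problem, consistent with the paper's own discussion, but it is not, and does not claim to be, a proof of the conjecture.
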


\subsection{The size of the net}
We remark that the conclusion of Proposition \ref{prop-upper-bound-thick} is not sharp. Indeed,  one can show 
$$ n_{S}(\epsilon, \delta) \leq C(S, \delta) |\log(\epsilon)|^{\dim(\CT(S))}, $$
for $C$ a constant depending only on $S$ and $\delta$. However, improving the bound on $n_{S}(\epsilon, \delta)$ as above does not lead to an improved exponent in the upper bound for $\CO(K,S)$, so for simplicity we only sketch the proof here: 

One first argues that given $R>0$, there exists a constant $T=T(R,S, \delta)$ so that any ball of radius $R$ in $\CM(S)$ admits a $\delta$-net of size at most $T$. Then for $W$ a complete marking on $S$,  let $\CM^{W} \subset \CM(S)$ denote the surfaces for which $W$ is the shortest marking, and consider the map 
$$\lambda_{W}: \CM^{W} \rightarrow \mathbb{R}^{\dim(\CT(S))},$$
 sending $X$ to the tuple of logs of lengths of curves in $W$.

Then Theorem E of \cite{LRT} implies the existence of $C'=C'(S)$ so that for $X,Y \in \CM^{W}$,  
$||\lambda_{W}(X)- \lambda_{W}(Y)||_{\infty}$ small implies $d_{\text{simLip}}(X,Y) \leq C'$. It follows that for all $\delta$ sufficiently small, a $\delta$-net of $\text{Im}(\lambda_{W}) \subset \mathbb{R}^{\dim(\CT(S))}$ pulls back to a $C'$-net of $\CM^{W}$. Since $\text{Im}(\lambda_{W})$ restricted to $\CM^{\geq \epsilon}(S) \cap \CM^{W}$ lies in a Euclidean cube of volume roughly $| \log(\epsilon)|^{|W|}$, it then follows that 

$$ n_{S}(\epsilon, \delta) \leq T(C', S, \delta) \cdot M(S) \cdot |\log(\epsilon)|^{\dim(\CT(S))}, $$
where $M(S)$ is chosen to be much larger than the number of topological types of complete markings on $S$. 

\subsection{The Teichm{\"u}ller metric }Another advantage to the proof and conclusion of Proposition \ref{prop-upper-bound-thick} presented in Section $5$ is that both apply immediately to the $\epsilon$-thick part of $\CM(S)$ equipped with the Teichm{\"u}ller metric, as well as the symmetric Lipschitz metric. We compare this estimate to the work of Fletcher-Kahn-Markovic \cite{FKM}, which estimates the number of $\delta$-balls required to cover $\CM^{\geq \epsilon}(S)$ in the Teichm{\"u}ller metric, as a function of the genus of $S$. That is, they are primarily interested in fixing $\epsilon, \delta$ and letting $g \rightarrow \infty$, whereas Proposition \ref{prop-upper-bound-thick} is explicit in $\epsilon$, but not in the topology of the surface. 

We conclude by remarking that, using McMullen's K{\"a}hler-hyperbolic metric on Teichm{\"u}ller space and the fact that it is bi-lipschitz equivalent to the Teichm{\"u}ller metric, one can produce a bound on the order of $C(\epsilon,S)(1/\epsilon)^{\dim(\CT(S))}$ for the size of an $\epsilon$-net of $\CM^{\geq \epsilon}(S)$ in the Teichm{\"u}ller metric. By Wolpert's inequality, the Lipschitz metric is bounded above by the Teichm{\"u}ller metric, and hence this in turn produces a bound on the same order for $n_{S}(\epsilon, \epsilon)$. However, since we are interested in the $e^{-\sqrt{K}}$-thick part, such a bound will produce an exponent growing faster than $\sqrt{|\chi(S)|}$ as a function of $S$. To circumvent this, one might try to use Theorem $1.4$ of \cite{AGPS}, which produces a metric for which a given curve $\gamma$ with $\iota(\gamma, \gamma) \leq K$ has length $\leq C \cdot \sqrt{K}$ for some $C=C(S)$, and which is $1/\sqrt{K}$-thick. However, the only known bounds on the constant $C$ grow exponentially in $|\chi(S)|$, and this would significantly increase the coefficient of $\sqrt{K}$ in the exponent for the upper bound of $\CO(K,S)$. 

\subsection{Acknowledgements} This project began during the workshop \textit{Effective and algorithmic methods in hyperbolic geometry and free groups} at the Institute for Computational and Experimental Research in Mathematics (ICERM) in Providence, RI, and we thank ICERM for its hospitality. We also thank Jonah Gaster for pointing out that Theorem \ref{good metric} implies Corollary \ref{Gas}, and Sebastien Gouezel and Priyam Patel for helpful conversations. The first author was fully supported by NSF postdoctoral grant DMS-1502623.

\end{document}